\newtheorem{theorem}{Theorem}[section]
\newtheorem{remark}{Remark}[section]
\newtheorem{proposition}{Proposition}[section]
\newenvironment{proof}
{\begin{trivlist}
\item[\hspace{\labelsep}{\bf\noindent Proof. }]}
{$\hfill\Box$\end{trivlist}}
\begin{document}
\title{Telegraph process with elastic boundary at the origin\footnote{This is a post-peer-review, pre-copyedit version of an article published 
in Methodology and Computing in Applied Probability. The final authenticated version is available online at 
https://doi.org/10.1007/s11009-017-9549-4.}}
\author{Antonio Di Crescenzo\thanks{Dipartimento di Matematica,
Universit\`a degli Studi di Salerno, 84084 Fisciano (SA), Italy. e-mail: \texttt{adicrescenzo@unisa.it}}
\and
Barbara Martinucci\thanks{Dipartimento di Matematica,
Universit\`{a} degli Studi di Salerno, 84084 Fisciano, SA, Italy. e-mail:
\texttt{bmartinucci@unisa.it} (corresponding author)}\and Shelemyahu
Zacks\thanks{Department of Mathematical Sciences, Binghamton University, Binghamton, NY 13902-6000, USA. e-mail:
\texttt{shelly@math.binghamton.edu}.}
}
\date{}
\maketitle

\begin{abstract}
We investigate the one-dimensional telegraph random process in the presence of an elastic boundary 
at the origin. This process describes a finite-velocity random motion that alternates between two 
possible directions of motion (positive or negative). When the particle hits the origin, it is either 
absorbed, with probability $\alpha$, or reflected upwards, with probability $1-\alpha$. 
In the case of exponentially distributed random times between consecutive changes of direction, 
we obtain the distribution of the renewal cycles and of the absorption time at the origin. 
This investigation is performed both in the case of motion starting from the origin and non-zero 
initial state. We also study the probability law of the process within a renewal cycle. 

\noindent\emph{Keywords:} Finite velocity, Random motion, Telegraph process, Elastic boundary, Absorption time, Renewal cycle.\\
%
\end{abstract}

\section{\bf Introduction}\label{section:1}
The (integrated) telegraph process describes an alternating random motion with finite velocity. This stochastic process 
deserves interest in various applied fields, such as physics, finance, and mathematical biology. 
Among the first authors that studied the solution of the telegraph equation we recall Goldstein \cite{Goldstein} and Kac \cite{Kac}. Several aspects and generalization of the  telegraph process have been provided in a quite large literature. 
Orsingher \cite{Orsingher90} studied the probability law, flow function, maximum distribution of wave-governed 
random motions of the telegraph type. The distributions of the first-passage time and of the maximum of the 
telegraph process were obtained by Foong \cite{Foong}.  
The solutions of the one-dimensional telegraph equation on a semi-infinite line terminated by a trap, 
and on a finite line terminated by two traps were determined by Masoliver {\em et al.}\ \cite{Masoliver92}. 
The analysis of the telegraph process in the presence of reflecting and absorbing barriers was also 
investigated in Orsingher \cite{Orsingher95} and Ratanov \cite{Ratanov1997}.
\par
Restricting the attention to some recent contributions, we also mention  Beghin {\em et al.}\ \cite{BeghinNOrsingher} and L\'opez and Ratanov \cite{LoRa2014} for the asymmetric telegraph process, Bogachev and Ratanov \cite{Bogachev} for the distribution of the occupation time
of the positive half-line for the telegraph process, Crimaldi {\em et al.}\ \cite{CDCIMa2013} for a telegraph process driven by certain random trials, De Gregorio and Macci \cite{Macci} for the large deviation principle applied to the telegraph process, Di Crescenzo and Martinucci \cite{DCMa2010} for a damped telegraph process, 
Fontbona {\em et al.}\ \cite{Fontbona} for the long-time behavior of an ergodic variant of the telegraph process, 
Stadje and Zacks \cite{StadjeZacks2004} for the telegraph process with random velocities, 
Pogorui {\em et al.}\ \cite{PoRoKo2015} for estimates of the number of  level-crossings for the telegraph process,  
Di Crescenzo and Zacks \cite{DCZa2015} for the analysis of a generalized telegraph process perturbed by 
Brownian motion, De Gregorio and Orsingher \cite{DeGregorioOrsingher} and Garra and Orsingher \cite{GarraOrsingher} 
for certain multidimensional extension of the telegraph process. 
Moreover, D'Ovidio {\em et al.}\ \cite{DOvidio2014} investigate other types of multidimensional 
extensions of the telegraph process, whose distribution is related to space-time fractional 
$n$-dimensional telegraph equations.
A modern treatment of the one-dimensional telegraph stochastic processes, 
with a thorough view to their applications in financial markets, is provided  in 
the book by Kolesnik and Ratanov \cite{KoRa2013}. See also Ratanov \cite{Ratanov2015} for 
a generalization of jump-telegraph process with variable velocities applied to markets modelling. 
\par
Most of the above references are concerning analytical results. However, in some instances one is forced 
to adopt computational methods to solve the governing 
equations. See, for instance, Acebr\'on and Ribeiro \cite{AcebronRibeiro}, where a 
Monte Carlo algorithm is derived to solve the one-dimensional telegraph equations
in a bounded domain subject to suitable boundary conditions.
\par
Several applications of the telegraph process and its numerous generalizations have been stimulated by 
problems involving dynamical systems subject to dichotomous noise. For instance, such processes can 
be used for the description of stochastic dynamics of extended thermodynamic 
theories far from equilibrium (see Giona {\em et al.}\ \cite{Giona}). The need to model physical systems 
in the presence of a variety of complex conditions encouraged several authors to   
analyze stochastic processes restricted by suitable boundaries, such as the elastic ones. 
Examples of papers dealing with elastic boundaries are provided by Veestraeten \cite{Veestraeten} 
and Buonocore {\em et al.}\ \cite{Buonocoreetal}. 
\par
Analytical results on stochastic processes restricted by elastic boundaries have been obtained by 
various authors, such as Domin\'e \cite{Domine}, \cite{Domine2}, for the first-passage problem of the 
Wiener process with drift,  Giorno {\em et al.}\ \cite{Giornoetal} for the construction 
of first-passage-time densities for diffusion processes,  Beghin and Orsingher \cite{BeghinOrsingher} 
for the analysis of fractional diffusion equations. Furthermore, Jacob \cite{Jacob1}, \cite{Jacob2} studied 
a Langevin process with partially elastic boundary at zero and related stochastic differential equations. 
\par
The analysis of finite-velocity random motions subject to elastic boundaries seems to be quite new. 
Along the lines of the previous papers, we investigate the distribution of a one-dimensional 
telegraph process $\{X(t); t\geq 0\}$ in the presence of an elastic boundary at $0$. 
This process describes the motion of a particle over the state space $[0, +\infty)$ and starting at 
$x\geq 0$. The particle moves on the line up and down alternating. For simplicity, 
we assume that the motion has velocity 1 (upward motion) and $-1$ (downward motion). 
Initially, the motion proceeds upward for a positive random time $U_1$. 
After that, the particle moves downward for a positive random time $D_1$, and so on 
the motion alternates along the random times $U_2,D_2,U_3,D_3,\ldots$, where $\{U_i\}_{i\in \mathbb{N}}$ 
and $\{D_i\}_{i\in \mathbb{N}}$ are independent sequences of i.i.d.\ random variables. 
When the particle hits the origin it is either absorbed, with probability $\alpha$ or reflected upwards, 
with probability $1-\alpha$, with $0<\alpha<1$. Specifically, if during a downward period, 
say $D_j$, the particle reaches the origin and is not absorbed, then  instantaneously 
the motion restarts with positive velocity, according to an independent random time $U_{j+1}$. 
\par
The analysis of the telegraph process and related processes is often based on the resolution 
of partial differential equations with proper boundary conditions. However, in this case such 
approach seems to be not fruitful so that we will adopt renewal theory arguments. 
We denote by $C_{x}$ the random time till the first arrival at the origin, with 
starting point  $x\geq 0$, and by 
$C_{0,i}$ the (eventual) $i$th interarrival time between consecutive visits at the origin following 
$C_{x}$, for $i\in \mathbb{N}$. Moreover, let $A_{x}$ denote the time till absorption at the origin 
conditional on initial state $x\geq 0$. Let $M$ be the random number of arriving at the origin, 
until absorption. Clearly, $M$ has a geometric distribution, with 
\begin{equation}
\mathbb{P}(M=m)=\alpha (1-\alpha)^{m-1},\qquad m\in \mathbb{N}, \quad \alpha\in (0,1).
\label{distrM}
\end{equation}
We remark that the random variables $C_{x}, C_{0,1}, C_{0,2}, \ldots$ are independent. 
Moreover, $C_{0,1}, C_{0,2}, \ldots$ are identically distributed, and are called renewal cycles. 
For brevity, we denote by $C_0$  a random variable that is identically distributed as $C_{0,i}$, 
$i\in \mathbb N$. Clearly, the distribution of $C_{x}$ is identical to that of the renewal cycles if $x=0$. 

 \ref{fig:1} shows an example of sample path of $X(t)$, where $D_j^*$ denotes 
the downward random period $D_j$ truncated by the occurrence of the visit at the origin. 
Finally, we point out  the following relation:
\begin{equation}
A_{x}=C_{x}+{\bf 1}_{\{M>1\}}{\sum_{i=1}^{M-1}C_{0,i}}.
\label{RelAx}
\end{equation}
\par
This is the plan of the paper. In Section \ref{section:2} we provide some basic definitions 
and recall some useful results on the distribution of the renewal cycles when $U_i$ are 
exponentially distributed and $D_i$ have a general distribution. 
In Section \ref{section:3} we analyze the absorption time and renewal cycles when the initial 
state is zero, and $U_i$ and $D_i$ have exponential distribution with unequal parameters. 
In this case we obtain the explicit expression of the probability density function (PDF), 
moment generating function (MGF), and moments of $A_{0}$ and $C_{0}$. 
In Section \ref{section:4} we study the absorption time and renewal cycles for non-zero 
initial state. We determine the PDF, the MGF and the moments of $C_{x}$, as well as the 
MGF and the moments of $A_{x}$. Finally, in Section \ref{section:5} we study the 
conditional distribution of $X(t)$ within a renewal cycle. 
\par
The main probabilistic characteristics of the process under investigation will be determined in an 
analytical form. Even if the expressions seem complicated they can be evaluated in standard 
computer environments, as shown in various figures throughout the paper. 
\begin{figure}[t]\label{fig:1}
\begin{center}
\begin{picture}(341,166) 
\put(20,70){\vector(1,0){350}} 
\put(20,70){\vector(0,1){80}} 
\put(10,150){\makebox(40,15)[t]{$X(t)$}} 
\put(355,48){\makebox(30,15)[t]{$t$}} 
\put(0,57){\makebox(20,15)[t]{0}} 
\put(0,77){\makebox(20,15)[t]{$x$}} 
\put(20,90){\line(1,1){30}} 
\put(50,120){\line(1,-1){40}} 
\put(90,80){\line(1,1){20}} 
\put(110,100){\line(1,-1){30}} 
\put(140,70){\line(1,1){30}} 
\put(170,100){\line(1,-1){20}} 
\put(190,80){\line(1,1){30}} 
\put(220,110){\line(1,-1){40}} 
\put(280,70){\line(1,1){30}} 
\put(310,100){\line(1,-1){30}} 
\put(50,67){\line(0,1){6}} 
\put(90,67){\line(0,1){6}} 
\put(110,67){\line(0,1){6}} 
\put(140,67){\line(0,1){6}} 
\put(170,67){\line(0,1){6}} 
\put(190,67){\line(0,1){6}} 
\put(220,67){\line(0,1){6}} 
\put(260,67){\line(0,1){6}} 
\put(280,67){\line(0,1){6}} 
\put(310,67){\line(0,1){6}} 
\put(340,67){\line(0,1){6}} 
\put(20,44){\line(0,1){8}} 
\put(140,44){\line(0,1){8}} 
\put(260,44){\line(0,1){8}} 
\put(280,44){\line(0,1){8}} 
\put(340,44){\line(0,1){8}} 
\put(70,48){\vector(-1,0){50}} 
\put(70,48){\vector(1,0){70}} 
\put(190,48){\vector(-1,0){50}} 
\put(170,48){\vector(1,0){90}} 
\put(310,48){\vector(-1,0){30}} 
\put(290,48){\vector(1,0){50}} 
\put(20,16){\line(0,1){8}} 
\put(50,16){\line(0,1){8}} 
\put(90,16){\line(0,1){8}} 
\put(110,16){\line(0,1){8}} 
\put(140,16){\line(0,1){8}} 
\put(170,16){\line(0,1){8}} 
\put(190,16){\line(0,1){8}} 
\put(220,16){\line(0,1){8}} 
\put(260,16){\line(0,1){8}} 
\put(280,16){\line(0,1){8}} 
\put(310,16){\line(0,1){8}} 
\put(340,16){\line(0,1){8}} 
\put(264,75){\line(1,0){2}} 
\put(269,75){\line(1,0){2}} 
\put(274,75){\line(1,0){2}} 
\put(264,20){\line(1,0){2}} 
\put(269,20){\line(1,0){2}} 
\put(274,20){\line(1,0){2}} 
\put(264,48){\line(1,0){2}} 
\put(269,48){\line(1,0){2}} 
\put(274,48){\line(1,0){2}} 
\put(50,20){\vector(-1,0){30}} 
\put(30,20){\vector(1,0){20}} 
\put(75,20){\vector(-1,0){25}} 
\put(75,20){\vector(1,0){15}} 
\put(105,20){\vector(-1,0){15}} 
\put(105,20){\vector(1,0){5}} 
\put(125,20){\vector(-1,0){15}} 
\put(125,20){\vector(1,0){15}} 
\put(155,20){\vector(-1,0){15}} 
\put(155,20){\vector(1,0){15}} 
\put(195,20){\vector(-1,0){5}} 
\put(185,20){\vector(1,0){5}} 
\put(210,20){\vector(-1,0){40}} 
\put(205,20){\vector(1,0){15}} 
\put(240,20){\vector(-1,0){20}} 
\put(240,20){\vector(1,0){20}} 
\put(305,20){\vector(-1,0){25}} 
\put(285,20){\vector(1,0){25}} 
\put(335,20){\vector(-1,0){25}} 
\put(315,20){\vector(1,0){25}} 
\put(55,28){\makebox(50,15)[t]{$C_x$}} 
\put(175,28){\makebox(50,15)[t]{$C_{0,1}$}} 
\put(286,28){\makebox(50,15)[t]{$C_{0,m-1}$}} 
\put(10,0){\makebox(50,15)[t]{$U_1$}} 
\put(45,0){\makebox(50,15)[t]{$D_1$}} 
\put(75,0){\makebox(50,15)[t]{$U_2$}} 
\put(100,0){\makebox(50,15)[t]{$D_2^*$}} 
\put(130,0){\makebox(50,15)[t]{$U_3$}} 
\put(155,0){\makebox(50,15)[t]{$D_3$}} 
\put(180,0){\makebox(50,15)[t]{$U_4$}} 
\put(215,0){\makebox(50,15)[t]{$D_4^*$}} 
\put(270,0){\makebox(50,15)[t]{$U_n$}} 
\put(300,0){\makebox(50,15)[t]{$D_n$}} 
\end{picture} 
\end{center}
\vspace{-0.3cm}
\caption{A sample-path of $X(t)$.}
\end{figure}
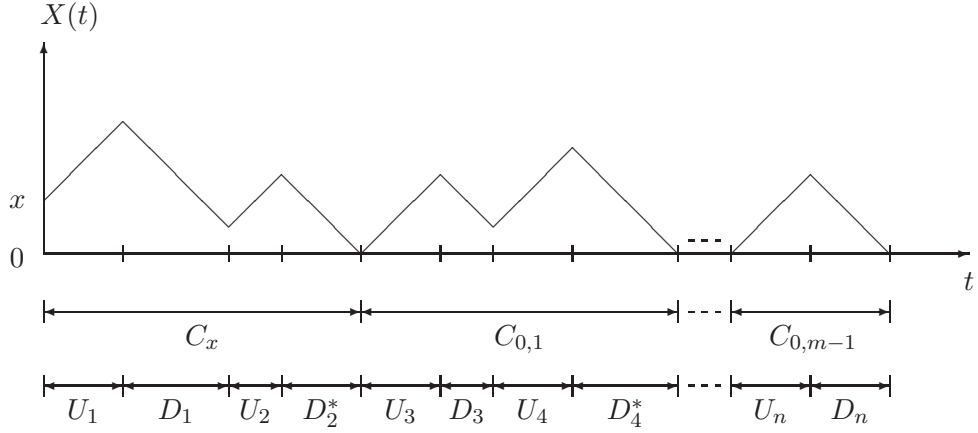 
%
\section{\bf  Preliminaries on the renewal cycles}\label{section:2}
Let us denote by $F$ and $G$ the cumulative distribution functions of $U_i$ and $D_i$, 
respectively. We assume that the upward periods of the motion have exponential distribution, i.e. 
\begin{equation}
 F(t)=1-e^{-\lambda t},
 \qquad t\in [0, \infty), \quad  \lambda\in (0, \infty). 
\label{defF}
\end{equation}
Aiming to determine the  distribution of the renewal cycles, we consider the auxiliary compound 
Poisson process
\begin{equation}
Y(t)=\sum_{n=1}^{N(t)}D_{n},
\label{Ydef}
\end{equation}
where 
$$
 N(t)=\max \{n\in \mathbb{N}_0:\sum_{i=1}^{n}U_{i}\leq t\},
$$ 
and thus $N(t)=0$ if $U_{1}>t$. Clearly, $N(t)$ is a Poisson process with intensity $\lambda$, 
so that $\mathbb{P}[Y(t)=0]=e^{-\lambda t}$, $t\in [0, \infty)$, due to (\ref{defF}). 
Moreover, if $Y(t)=s-t$, with $t\in (0,s)$, this means that the total time  (from $0$ to $s$) of 
moving upwards or downwards equals $t$ or $s-t$, respectively. 
The PDF of the absolutely continuous component of $Y(t)$, for $t\in (0, \infty)$, is
\begin{equation}
h(y;t):= \frac{\rm d}{{\rm d}y}\mathbb{P}[Y(t)\leq y]
=e^{-\lambda t}\sum_{n=1}^{+\infty }\frac{(\lambda t)^{n}}{n!}%
g^{(n)}(y),
\qquad y\in (0, \infty),
\label{pdfY}
\end{equation}
where $g^{(n)}(y)$ is the $n$-fold convolution of the PDF of $G$.
\par
Let us now define, for any $x\in [0, \infty)$, the following stopping time: 
\begin{equation}
T_x=\inf \{t>0:Y(t)\geq x+t\}.
\label{Txdef}
\end{equation}
If the motion starts from the origin, i.e.\ $x=0$, then all renewal cycles $C_{0,i}$, $i\in \mathbb{N}$, 
are distributed as $C_{0}$. 
In this case, since the first visit to the origin occurs at the first instant in which the total time downward 
is greater or equal to that of the time upward, we have 
\begin{equation}
C_{0}\stackrel{d}{=} 2T_{0}, \qquad i\in \mathbb{N},
\label{relTC0}
\end{equation}
where $\stackrel{d}{=}$ means equality in distribution. 
When the initial state is away from the origin, i.e.\ $x\in (0, \infty)$, similarly it is   
\begin{equation}
C_{x}\stackrel{d}{=}  x+2T_{x}.
\label{relTCx}
\end{equation}
\par
Notice that the stopping time (\ref{Txdef}) is not necessarily a proper random variable. Indeed, 
$\mathbb{P}(T_{x}<\infty )=1$ for $x\in [0, \infty)$ if and only if $\mathbb E[D_1]\geq \mathbb E[U_1]$, 
and the moments of $T_{x}$ are finite only if $\mathbb E[D_1]> \mathbb E[U_1]$ (see, for instance, 
section 3 of Zacks {\em et al.}\ \cite{Zacks1999}). 
\par
For all $x\in [0, \infty)$, let us now introduce the following subdensity,
\begin{equation}
 g_x(y,t):=\frac{\rm d}{{\rm d}y}\mathbb{P}[Y(t)\leq y, T_x>t], \qquad y\in (0, \infty),\ t\in (0, \infty),
 \label{eq:gxyt}
\end{equation}
and the PDF of the stopping time $T_x$,
\begin{equation}
\psi _{x}(t):=\frac{\rm d}{{\rm d}t}\mathbb{P}(T_x\leq t), \qquad   t\in (0, \infty).
 \label{eq:defpsi}
\end{equation}
The following proposition recalls some useful results obtained by Stadje and Zacks \cite{StadjeZacks03}, 
concerning the functions introduced in (\ref{eq:gxyt}) and (\ref{eq:defpsi}). 
\begin{proposition}
(i) \ The subdensity given in (\ref{eq:gxyt}) can be expressed in terms of $h(y,t)$, defined in Eq.\ (\ref{pdfY}), 
as follows. \\
-- \ If $x=0$, then 
\begin{equation}
 g_0(y,t)=\frac{t-y}{t} h(y,t),\qquad  t\in (0, \infty), \ y\in (0, t).
 \label{g0def}
\end{equation}
-- \ If $x\in (0, \infty)$, then, for $0<y<x+t$ and  $t\in (0, \infty)$,
\begin{eqnarray}
&& \hspace*{-1.5cm}
g_{x}(y,t)={\bf 1}_{\{0<y\leq x\}} h(y;t)+{\bf 1}_{\{x<y<x+t\}}\bigg[h(y;t)-h(y;y-x)e^{-\lambda (t-y+x)}
\nonumber
\\
&& \hspace*{-0.3cm}
-(t-y+x)\int_{t+x-y}^{t}\frac{1}{u} h(u-t+y-x;u)h(t-u+x;t-u)du\bigg].
\label{gxdef}
\end{eqnarray}
(ii) \ For all $x\in [0, \infty)$, the PDF of the stopping time $T_x$ is given by
\begin{equation}
\psi _{x}(t)=\lambda e^{-\lambda t}
 \overline{G}(t+x)+\lambda \int_{0}^{t+x}g_{x}(y,t)\overline{G}(t-y+x)dy,\qquad t\in (0, \infty),
\label{psix}
\end{equation}
where $\overline{G}(t)=1-G(t)=\mathbb{P}(D_1> t)$.
\end{proposition}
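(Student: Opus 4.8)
The plan is to read both quantities as first-passage functionals of the spectrally positive process $Z(t)=Y(t)-t$, which starts at $0$, decreases with unit slope between the Poisson arrivals and jumps upward by $D_n$ at each arrival. In this language $T_x=\inf\{t>0:Z(t)\ge x\}$, the subdensity $g_x(y,t)$ is the (defective) density of $Y(t)=y$ on the survival event $\{Z(s)<x,\ 0<s\le t\}$, and $\psi_x$ is the first-passage density of $Z$ over the level $x$. The whole proposition then reduces to a ballot-type survival computation plus one infinitesimal conditioning.

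I would establish part (ii) first, since it follows from (i) by a single conditioning. Because $Z$ moves down continuously and up only by jumps, the level $x$ can be reached for the first time only at an arrival epoch, so $\{T_x\in dt\}$ forces a Poisson arrival in $[t,t+dt]$ of rate $\lambda\,dt$, independent of the past. Conditioning on the pre-jump value $Y(t^-)=y$ (equivalently $Z(t^-)=y-t<x$) and on survival up to $t$, the arriving increment $D$ must satisfy $D\ge x+t-y$, an event of probability $\overline G(t-y+x)$. The survival-and-state law splits into the atom $y=0$ (no prior arrival, probability $e^{-\lambda t}$), producing the term $\lambda e^{-\lambda t}\overline G(t+x)$, and the absolutely continuous part carried by $g_x(y,t)$ on $0<y<x+t$, producing $\lambda\int_0^{t+x}g_x(y,t)\overline G(t-y+x)\,dy$. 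Summing the two contributions gives (\ref{psix}).

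For part (i) with $x=0$ the engine is Kemperman's ballot (cycle) formula for spectrally positive processes: conditionally on $Y(t)=y$ with $0<y<t$, the probability that the path stays strictly below the diagonal, $Y(s)<s$ for all $s\in(0,t]$, equals $\tfrac{t-y}{t}$. Multiplying this survival probability by the unconstrained density $h(y;t)$ of $Y(t)=y$ yields (\ref{g0def}). For $x>0$ I would split on the terminal value. If $0<y\le x$, monotonicity of $Y$ forces $Y(s)\le Y(t)=y\le x<x+s$ for every $s>0$, so the barrier is never touched and $g_x(y,t)=h(y;t)$. The substantive case is $x<y<x+t$, where I would write $g_x(y,t)=h(y;t)-\partial_y\mathbb P(Y(t)\le y,\ T_x\le t)$ and evaluate the crossing probability by decomposing on the first passage of $Z$ to level $x$. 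Conditioning on the crossing epoch $u$ and applying the strong Markov property, the path splits into an independent segment of length $u$ that stays below the shifted barrier and ends $t-y+x$ below the diagonal — carrying the $x=0$ Kemperman weight $(t-y+x)/u$ and density $h(u-t+y-x;u)$ — together with a complementary segment of length $t-u$ whose law $h(t-u+x;t-u)$ governs the climb to level $x$; integrating over $u\in(t+x-y,t)$ reproduces the integral term, while the degenerate no-further-jump configuration (reaching $Z=x$ at time $y-x$ and then idling, probability $e^{-\lambda(t-y+x)}$) accounts for the boundary term $h(y;y-x)e^{-\lambda(t-y+x)}$. Subtracting these from $h(y;t)$ gives (\ref{gxdef}).

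The main obstacle is precisely this last decomposition for $x>0$: one must control the overshoot at the first crossing and the constraint that the constrained segment stay below the barrier, and then verify that the Kemperman weighting and the convolution of the two path segments assemble into exactly the $1/u$-integral and the exponential boundary term, with the correct endpoints of integration. The $x=0$ survival factor and the conditioning in (ii), by contrast, are routine once this structure is in place.
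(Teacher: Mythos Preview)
The paper does not actually prove this proposition: it is stated as a quotation of results from Stadje and Zacks (2003), so there is no ``paper's own proof'' to compare against. Your sketch is therefore being judged on its own merits, and for part (ii), for the case $x=0$ via the Tak\'acs/Kemperman ballot identity, and for the trivial range $0<y\le x$, it is clean and correct.

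The one place that needs sharpening is the case $x<y<x+t$. You write ``conditioning on the crossing epoch $u$'' and then invoke the strong Markov property, but if $u$ is taken to be the \emph{first} crossing time $T_x$ this cannot work: at $T_x$ the process $Z=Y-\mathrm{id}$ overshoots the level $x$ by a random amount, and that overshoot does not disappear from the bookkeeping---you flag this yourself as the ``main obstacle'' without saying how it is removed. What your formula components actually encode (and what makes the argument go through) is a decomposition at the \emph{last} time $\sigma\le t$ with $Z(\sigma)\ge x$; because $Z$ leaves $[x,\infty)$ only by continuous downward drift, one has $Z(\sigma)=x$ exactly, with no overshoot. Writing $u=t-\sigma$, the post-$\sigma$ piece of length $u$ is a fresh compound Poisson path that stays strictly below its diagonal and ends at $y-(\sigma+x)=u-(t-y+x)$, giving the Kemperman factor $(t-y+x)/u$ and density $h(u-(t-y+x);u)$; the pre-$\sigma$ piece of length $t-u$ is unconstrained with $Y(t-u)=(t-u)+x$, density $h(t-u+x;t-u)$; and the atom of the post-$\sigma$ increment at $0$ (no further jumps) produces the boundary term $h(y;y-x)e^{-\lambda(t-y+x)}$. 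This matches your description once ``crossing epoch'' is read as ``last exit''. Note, however, that $\sigma$ is not a stopping time, so the strong Markov property does not apply to it directly; the clean way to justify the split is to time-reverse on $[0,t]$, turning $\sigma$ into the first-passage time of the reversed process to level $-(t-y+x)$, which \emph{is} a stopping time and is hit without overshoot. With that clarification your argument is complete.
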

\par
In the sequel we assume that the distribution of the downward random times $D_n$ is exponential 
with parameter $\mu$, i.e.
\begin{equation}
 G(t)=1-e^{-\mu t},\qquad  t\in [0, \infty), 
 \label{defG}
\end{equation}
with $0<\mu<\lambda$ in order to ensure that $\mathbb E[D_1]> \mathbb E[U_1]$.  
\section{\bf  Absorption time and renewal cycles for zero initial state}\label{section:3}
In the present section we consider the special case of initial state $x=0$. 
Recalling that $U_1$ and $D_1$ are exponentially distributed with parameters 
$\lambda$ and $\mu$, respectively, with $0<\mu<\lambda$, from Eq.\ (\ref{pdfY}) we have 
\begin{equation}
h(y,t)= \frac{\sqrt{\lambda \mu t}}{\sqrt{y}} I_1\left(2 \sqrt{\lambda \mu t y}\right)\ e^{-\lambda t-\mu y},
\qquad y\in (0, \infty), \ \ t\in (0, \infty),
\label{hexp}
\end{equation}
where 
\begin{equation}
I_n(z)=\left(\frac{z}{2}\right)^n\, \sum_{k=0}^{+\infty} \frac{\left(\frac{1}{4} z^2 \right)^k}{k! (k+n)!}, 
\qquad n\in {\mathbb N}
\label{modBessel}
\end{equation}
is the $n$-th modified Bessel function of the first kind. We are now able to 
obtain the PDF of (\ref{Txdef}) when $x=0$. 
\begin{proposition}\label{prop:psi0}
Under assumptions (\ref{defF}) and (\ref{defG}), with $0<\mu<\lambda$, the PDF of $T_0$ is given by
\begin{equation}
\psi_0(t)=\frac{\lambda e^{-(\lambda + \mu) t} }{t \sqrt{\lambda \mu}}I_1\left(2 t \sqrt{\lambda \mu}\right), 
\qquad t\in (0, \infty).
\label{psi0exp}
\end{equation}
\end{proposition}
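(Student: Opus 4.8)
The plan is to feed the explicit exponential data into the general formula (\ref{psix}) specialized to $x=0$, and to reduce the whole expression to a single integral of a modified Bessel function that can be evaluated in closed form.

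First I would put $x=0$ in (\ref{psix}) and use $\overline{G}(t)=e^{-\mu t}$ from (\ref{defG}), which gives
\[
\psi_0(t)=\lambda e^{-(\lambda+\mu)t}+\lambda\int_0^t g_0(y,t)\,e^{-\mu(t-y)}\,dy .
\]
Then I would substitute $g_0(y,t)=\frac{t-y}{t}\,h(y,t)$ from (\ref{g0def}) together with the explicit kernel $h(y,t)$ from (\ref{hexp}). The key simplification is that the exponential factor $e^{-\lambda t-\mu y}$ carried by $h$ multiplies the factor $e^{-\mu(t-y)}$ coming from $\overline{G}$ to produce $e^{-(\lambda+\mu)t}$, which is independent of $y$ and hence pulls out of the integral. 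After the change of variable $u=\sqrt{y}$ this leaves
\[
\psi_0(t)=\lambda e^{-(\lambda+\mu)t}\left(1+\frac{\sqrt{\lambda\mu t}}{t}\,J\right),\qquad J=2\int_0^{\sqrt t}(t-u^2)\,I_1(au)\,du,\quad a:=2\sqrt{\lambda\mu t}.
\]

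The main work is to evaluate $J$. I would split it as $2t\int_0^{\sqrt t}I_1(au)\,du-2\int_0^{\sqrt t}u^2 I_1(au)\,du$ and use the standard antiderivatives $\int I_1(au)\,du=a^{-1}I_0(au)$ and $\int u^2 I_1(au)\,du=a^{-1}u^2 I_2(au)$, both of which follow from the identity $\frac{d}{dz}\bigl[z^nI_n(z)\bigr]=z^nI_{n-1}(z)$ (equivalently, by integrating the series (\ref{modBessel}) term by term, which is justified since it converges uniformly on compacts). Evaluating at the endpoints expresses $J$ through $I_0(a\sqrt t)-I_2(a\sqrt t)$ plus a constant term, and the recurrence $I_0(z)-I_2(z)=\frac{2}{z}I_1(z)$ collapses the Bessel part to a multiple of $I_1(2t\sqrt{\lambda\mu})$.

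Finally I would reassemble the pieces. A short computation shows that $\frac{\sqrt{\lambda\mu t}}{t}\,J=\frac{1}{t\sqrt{\lambda\mu}}\,I_1(2t\sqrt{\lambda\mu})-1$, so the additive constant exactly cancels the leading $1$ in the bracket, leaving precisely the claimed density (\ref{psi0exp}). I expect the only delicate points to be the bookkeeping in the middle steps: tracking the powers of $t$, $\lambda$ and $\mu$ through the substitution $u=\sqrt y$, and recognizing the cancellation of the constant term produced by the lower endpoint. A purely series-based alternative---integrating (\ref{modBessel}) term by term and resumming---reaches the same answer but is more mechanical.
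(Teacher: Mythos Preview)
Your argument is correct. The starting expression
\[
\psi_0(t)=\lambda e^{-(\lambda+\mu)t}\Bigl[1+\tfrac{\sqrt{\lambda\mu t}}{t}\,J\Bigr],
\qquad
J=2\int_0^{\sqrt t}(t-u^2)\,I_1(au)\,du,\quad a=2\sqrt{\lambda\mu t},
\]
is exactly what the paper also reaches after substituting (\ref{g0def}), (\ref{hexp}) and (\ref{defG}) into (\ref{psix}). From that common point the two arguments diverge in style. The paper evaluates the integral via a table identity that produces a combination of hypergeometric functions $ {}_1F_2(1;2,2;\cdot)$ and $ {}_1F_2(2;3,2;\cdot)$ (see (\ref{psi0hyper})) and then invokes external identities to collapse these back to a single $I_1$. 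You instead evaluate $J$ directly, using the elementary antiderivatives $\int I_1(au)\,du=a^{-1}I_0(au)$ and $\int u^2 I_1(au)\,du=a^{-1}u^2 I_2(au)$, which follow from $\frac{d}{dz}[z^nI_n(z)]=z^nI_{n-1}(z)$, and then the standard recurrence $I_0(z)-I_2(z)=\tfrac{2}{z}I_1(z)$; the constant term $-2t/a$ coming from $I_0(0)=1$ exactly cancels the leading $1$ in the bracket, as you anticipated. Your route is more self-contained and transparent---no hypergeometric detour or table lookup is needed---while the paper's route is more mechanical once one has the Prudnikov and Wolfram references at hand. Both lead to (\ref{psi0exp}); your bookkeeping of the powers of $t$, $\lambda$, $\mu$ checks out.
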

\begin{proof}
From Eq.\ (\ref{psix}), and recalling Eqs.\ (\ref{defF}) and (\ref{defG}), it follows that, for $t\in (0, \infty)$, 
$$
\psi_0(t)=\lambda e^{-(\lambda + \mu) t} \left[1+\sqrt{\frac{\lambda \mu}{t}} \int_{0}^{t}
\frac{(t-y)}{\sqrt{y}} I_1\left(2 \sqrt{\lambda \mu t y}\right) \,{\rm d}y\right].
$$
Hence, after a change of variable, and recalling Eq.\ $(1.11.1.1)$ of Prudnikov {\em et al.}\ \cite{Prudnikov2}, 
we get
\begin{equation}
\psi_0(t)= \lambda e^{-(\lambda + \mu) t} \left\{1+ \frac{ {\lambda \mu t^2}}{2} 
 \left[ 2\; {}_{1}F_{2}(1;2,2;\lambda \mu t^2)-{{}_{1}F_{2}(2;3,2;\lambda \mu t^2)}\right]\right\},
\label{psi0hyper}
\end{equation}
where
\begin{equation}
 {}_{1}F_{2}(a;b,c;z)=\sum_{n=0}^{+\infty}
 \frac{(a)_n}{(b)_n(c)_n}\,\frac{z^n}{n!} 
 \label{Hyper1F2}
\end{equation}
is the hypergeometric function, with $(d)_0=1$ and $(d)_n=d(d+1)\cdots(d+n-1)$ for 
$n\in \mathbb{N}$ (the rising factorial). 
Making use of identities $(1)$ of \cite{Wolfram1} and $(5)$ of \cite{Wolfram2}, from Eq.\ (\ref{psi0hyper}) we obtain 
$$
\psi_0(t)= \lambda e^{-(\lambda + \mu) t} \left[1+ \frac{ {\lambda \mu t^2}}{2}    
\left\{ \frac{2}{(\lambda \mu t^2)^{3/2}}  I_1\left(2 t \sqrt{\lambda \mu }\right)-\frac{2}{\lambda \mu t^2}\right]\right\},
$$
so that Eq.\ (\ref{psi0exp}) finally follows.
\end{proof}
\par
We remark that the PDF given in (\ref{psi0exp}) identifies with the busy period PDF of an M/M/1 queue with 
arrival rate $\mu$ and service rate $\lambda$. 
\par
Let us now study the renewal cycle for zero initial state. We recall that the Gauss hypergeometric function 
is defined as 
\begin{equation}
 {}_{2}F_{1}(a,b;c;z)=\sum_{n=0}^{+\infty}
 \frac{(a)_n (b)_n}{(c)_n}\,\frac{z^n}{n!}.
 \label{Hyper2F1}
\end{equation}
\begin{proposition}\label{propdensC0}
Under the assumptions of Proposition \ref{prop:psi0}, the PDF of $C_0$ is given by
\begin{equation}
f_{C_0}(y)= \frac{1}{y}\sqrt{\frac{\lambda}{\mu}}{\lambda e^{-\frac{(\lambda + \mu)}{2} y}} I_1\left(y \sqrt{\lambda \mu}\right), 
\qquad y\in (0, \infty).
\label{fc0}
\end{equation}
The $n$th moment of $C_0$ is 
\begin{equation}
\mathbb E(C_0^n)= \frac{\lambda \, 2^n n!} {(\lambda+\mu)^{n+1} }\,
 {}_{2}F_{1}\left(\frac{n+1}{2}, \frac{n+2}{2}; 2; \frac{4 \lambda \mu}{(\lambda+\mu)^2}\right),
 \qquad n\in \mathbb{N},
\label{momC0}
\end{equation}
with mean and variance 
$$
\mathbb E(C_0)=\frac{2}{\lambda-\mu},
\qquad 
Var(C_0)=\frac{4(\lambda+\mu)}{(\lambda-\mu)^3}.
$$
\end{proposition}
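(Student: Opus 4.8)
The plan is to start from the distributional identity $C_0 \stackrel{d}{=} 2T_0$ recorded in (\ref{relTC0}) and the density $\psi_0$ of $T_0$ already computed in (\ref{psi0exp}). Since $C_0 = 2T_0$ is a pure rescaling, the density of $C_0$ is $f_{C_0}(y)=\tfrac12\,\psi_0(y/2)$; evaluating (\ref{psi0exp}) at $t=y/2$, so that the argument of the Bessel function becomes $y\sqrt{\lambda\mu}$, and simplifying the constant via $\lambda/\sqrt{\lambda\mu}=\sqrt{\lambda/\mu}$, gives (\ref{fc0}). This first step is immediate, involving only the Jacobian of the scaling.

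For the moments I would work directly from the density, writing $\mathbb{E}(C_0^n)=\int_0^\infty y^n f_{C_0}(y)\,dy$. After cancelling one factor of $y$ this becomes $\sqrt{\lambda/\mu}\int_0^\infty y^{n-1}e^{-(\lambda+\mu)y/2}I_1(y\sqrt{\lambda\mu})\,dy$. I would then insert the power series (\ref{modBessel}) for $I_1$ (all terms being positive, so term-by-term integration is legitimate by monotone convergence) and evaluate each resulting Gamma integral $\int_0^\infty y^{n+2k}e^{-py}\,dy=(n+2k)!/p^{\,n+2k+1}$, with $p=(\lambda+\mu)/2$. This leaves a single series in $k$, and the crux of the argument is to identify it with the Gauss function (\ref{Hyper2F1}). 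To do so I would write $(n+2k)!=n!\,(n+1)_{2k}$ and apply the duplication identity for the rising factorial, $(n+1)_{2k}=4^{k}\big(\tfrac{n+1}{2}\big)_k\big(\tfrac{n+2}{2}\big)_k$, together with $(k+1)!=(2)_k$, so that the summand takes exactly the form $\frac{(\frac{n+1}{2})_k(\frac{n+2}{2})_k}{(2)_k}\frac{z^k}{k!}$ with $z=c^2/p^2=4\lambda\mu/(\lambda+\mu)^2$. Collecting the prefactors, where $\sqrt{\lambda/\mu}\cdot\sqrt{\lambda\mu}=\lambda$, produces (\ref{momC0}). I expect this Pochhammer/duplication manipulation to be the main obstacle; the rest is bookkeeping.

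Finally, for the mean and variance I would specialize (\ref{momC0}) to $n=1,2$, using the key simplification $1-z=(\lambda-\mu)^2/(\lambda+\mu)^2$, so that $\sqrt{1-z}=(\lambda-\mu)/(\lambda+\mu)$ since $\lambda>\mu$. For $n=2$ the hypergeometric factor degenerates through $\,{}_2F_1(\tfrac32,2;2;z)=(1-z)^{-3/2}$ (the case of equal upper and lower parameters), giving $\mathbb{E}(C_0^2)=8\lambda/(\lambda-\mu)^3$; for $n=1$ the elementary closed form ${}_2F_1(1,\tfrac32;2;z)=\tfrac{2}{z}\big((1-z)^{-1/2}-1\big)$ gives $\mathbb{E}(C_0)=2/(\lambda-\mu)$. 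Then $\mathrm{Var}(C_0)=\mathbb{E}(C_0^2)-\mathbb{E}(C_0)^2=4(\lambda+\mu)/(\lambda-\mu)^3$. As an independent check, the value of the mean also follows from $\mathbb{E}(C_0)=2\,\mathbb{E}(T_0)$ and the known mean $1/(\lambda-\mu)$ of the M/M/1 busy period to which $\psi_0$ corresponds.
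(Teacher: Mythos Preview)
Your proposal is correct and follows essentially the same route as the paper: the density (\ref{fc0}) is obtained from $C_0\stackrel{d}{=}2T_0$ by a scaling of (\ref{psi0exp}), and the moments from the resulting Bessel integral. The only difference is cosmetic: where the paper invokes a tabulated Laplace--Bessel integral (Prudnikov \emph{et al.}, vol.~4, Eq.~3.15.1.2) to pass directly to (\ref{momC0}), you re-derive that identity by expanding $I_1$ termwise and recognising the ${}_2F_1$ via the duplication $(n+1)_{2k}=4^k(\tfrac{n+1}{2})_k(\tfrac{n+2}{2})_k$; your explicit specialisations for $n=1,2$ likewise replace what the paper leaves implicit.
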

\begin{proof}
From assumptions (\ref{defF}) and (\ref{defG}), due to relation (\ref{relTC0}), we immediately obtain the 
PDF (\ref{fc0}). Hence, the moments (\ref{momC0}) follow recalling Eq.\ $(3.15.1.2)$ of \cite{Prudnikov4}.
\end{proof}
\par
In the following proposition we obtain the expression of the MGF of the absorption time $A_0$. 
\begin{proposition}
Under the same assumptions of Proposition \ref{prop:psi0}, for $s<(\sqrt{\lambda}-\sqrt{\mu})^2/{2}$, 
the MGF of $A_0$ is
\begin{equation}
M_{A_0}(s):=\mathbb E({\rm e}^{s A_0})
 =\frac{2 \alpha \lambda}{2 \lambda (\alpha-1)+(\lambda+\mu-2 s)+\sqrt{(\lambda+\mu-2 s)^2 -4 \lambda \mu}}.
\label{fgmA0}
\end{equation}
\end{proposition}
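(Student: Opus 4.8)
The plan is to exploit that, for zero initial state, the absorption time is a \emph{geometric compound} of independent renewal cycles. Setting $x=0$ in relation (\ref{RelAx}) and recalling that the distribution of $C_x$ reduces to that of $C_0$ when $x=0$, I would write $A_0\stackrel{d}{=}\sum_{i=1}^{M}C_{0,i}$, where the $C_{0,i}$ are i.i.d.\ copies of $C_0$ (the single $C_0$-term of (\ref{RelAx}) together with the $M-1$ subsequent cycles makes $M$ copies in all), independent of the geometric count $M$ of visits to the origin given in (\ref{distrM}). Conditioning on $M$ and using independence,
\[
M_{A_0}(s)=\mathbb{E}\big[(M_{C_0}(s))^{M}\big]=G_M(M_{C_0}(s)),\qquad G_M(z)=\sum_{m=1}^{\infty}\alpha(1-\alpha)^{m-1}z^m=\frac{\alpha z}{1-(1-\alpha)z},
\]
the series converging for $|z|<(1-\alpha)^{-1}$. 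The whole problem thus reduces to (i) finding $M_{C_0}(s)$ and (ii) substituting it into the geometric probability generating function $G_M$.

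For step (i) I would compute $M_{C_0}(s)=\mathbb{E}[e^{sC_0}]$ by integrating $e^{sy}$ against the density (\ref{fc0}) of $C_0$ (equivalently, via $C_0\stackrel{d}{=}2T_0$ from (\ref{relTC0}) and the density (\ref{psi0exp})). Either way the core analytic ingredient is the Laplace transform of the modified Bessel function,
\[
\int_0^{\infty}\frac{1}{y}\,e^{-py}\,I_1(ay)\,{\rm d}y=\frac{p-\sqrt{p^{2}-a^{2}}}{a},\qquad p>a>0,
\]
which I would derive from the series (\ref{modBessel}) for $I_1$ together with the generating function of the Catalan numbers. Taking $p=(\lambda+\mu-2s)/2$ and $a=\sqrt{\lambda\mu}$, the condition $p>a$ is exactly $s<(\sqrt{\lambda}-\sqrt{\mu})^{2}/2$, which pins down the domain and the branch of the square root. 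This yields, writing $R=\sqrt{(\lambda+\mu-2s)^{2}-4\lambda\mu}$,
\[
M_{C_0}(s)=\frac{(\lambda+\mu-2s)-R}{2\mu}=\frac{2\lambda}{(\lambda+\mu-2s)+R},
\]
the second form obtained by rationalizing with the identity $\big[(\lambda+\mu-2s)-R\big]\big[(\lambda+\mu-2s)+R\big]=4\lambda\mu$; a check at $s=0$ gives $M_{C_0}(0)=1$.

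Finally, for step (ii) I would substitute the rationalized form into $G_M$. Writing $Q=\lambda+\mu-2s$,
\[
M_{A_0}(s)=\frac{\alpha\,\frac{2\lambda}{Q+R}}{1-(1-\alpha)\frac{2\lambda}{Q+R}}=\frac{2\alpha\lambda}{(Q+R)-2\lambda(1-\alpha)}=\frac{2\alpha\lambda}{2\lambda(\alpha-1)+(\lambda+\mu-2s)+\sqrt{(\lambda+\mu-2s)^{2}-4\lambda\mu}},
\]
which is exactly (\ref{fgmA0}). I expect the only genuinely delicate step to be the Bessel Laplace transform above, and in particular choosing the correct branch of $R$ so that $M_{C_0}(0)=1$ and the convergence region $s<(\sqrt{\lambda}-\sqrt{\mu})^{2}/2$ emerge consistently; one should also verify that $M_{C_0}(s)<(1-\alpha)^{-1}$ there, equivalently that the denominator $2\lambda(\alpha-1)+Q+R$ stays positive, so that the geometric compounding in step (i) is legitimate. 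The compounding identity and the final simplification via $Q^2-R^2=4\lambda\mu$ are then routine algebra.
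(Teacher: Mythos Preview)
Your proposal is correct and follows essentially the same route as the paper: both exploit the geometric compounding $M_{A_0}(s)=\alpha M_{C_0}(s)/[1-(1-\alpha)M_{C_0}(s)]$, compute $M_{C_0}(s)$ via the Laplace transform $\int_0^\infty y^{-1}e^{-py}I_1(ay)\,{\rm d}y=(p-\sqrt{p^2-a^2})/a$, and substitute. The only differences are cosmetic: you derive the Bessel integral from the series and Catalan numbers whereas the paper cites a table, and you rationalize $M_{C_0}(s)$ to $2\lambda/(Q+R)$ before substituting, which streamlines the final algebra slightly.
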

\begin{proof}
From Eq.\ (\ref{RelAx}), and recalling Eq.\ (\ref{distrM}), we have  
\begin{equation}
M_{A_0}(s)= \sum_{m=1}^{+\infty} \left[ M_{C_0}(s) \right]^m \mathbb P(M=m) 
 =\frac{\alpha M_{C_0}(s)}{1+(\alpha-1) M_{C_0}(s)},
\label{fgmA0:bis}
\end{equation}
where $M_{C_0}(s)$ is the MGF of $C_0$. 
Due to Eq.\ (\ref{fc0}), and recalling Eq.\ $(3.15.1.8)$ of \cite{Prudnikov4}, 
for $s<(\sqrt{\lambda}-\sqrt{\mu})^2/{2}$ it is 
\begin{eqnarray}
 M_{C_0}(s) &=& \sqrt{\frac{\lambda}{\mu}} \int_{0}^{+\infty} \frac{{\rm e}^{-((\lambda+\mu)/2-s) y}}{y} 
 I_{1}\left(y \sqrt{\lambda \mu}\right) {\rm d}y
 \nonumber \\
 &=& \frac{ (\lambda+\mu-2 s)-\sqrt{(\lambda+\mu-2 s)^2 -4 \lambda \mu}}{2 \mu}.
\label{fgmC0}
\end{eqnarray}
Finally, Eq.\ (\ref{fgmA0}) immediately follows from (\ref{fgmA0:bis}) and (\ref{fgmC0}). 
\end{proof}
\par
In the following theorem we obtain the PDF of the absorption time $A_0$.
\begin{theorem}
Under the same assumptions of Proposition \ref{prop:psi0}, for $y\in (0, \infty)$ we have
\begin{eqnarray}
f_{A_0}(y) & =& 
\alpha \frac{ e^{-\frac{(\lambda + \mu)}{2} y}}{y}  \Bigg\{ \sqrt{\frac{\lambda}{\mu}}I_1\left(y \sqrt{\lambda \mu}\right)
+ \sum_{m=2}^{+\infty} \frac{(\lambda y/2)^m (1-\alpha)^{m-1}}{(m-1)(m-1)!}  
\nonumber
\\
&\times &   \left[2 m \;{}_{1}F_{2}\left(\frac{m-1}{2};\frac{m+1}{2},m;\frac{\lambda \mu y^2}{4}\right)\right.
\nonumber
\\
& -& \left.(m+1) \;{}_{1}F_{2}\left(\frac{m-1}{2};\frac{m+1}{2},m+1;\frac{\lambda \mu y^2}{4}\right)
\right]\Bigg\},
\label{pdfA0}
\end{eqnarray}
with ${}_{1}F_{2}$ defined in (\ref{Hyper1F2}). 
\end{theorem}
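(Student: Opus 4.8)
The plan is to exploit that, for $x=0$, relation (\ref{RelAx}) reduces to $A_0\stackrel{d}{=}\sum_{i=1}^{M}C_{0,i}$, a geometric random sum of independent copies of $C_0$, with $M$ as in (\ref{distrM}). Conditioning on $M=m$ and using independence, the density of $A_0$ is the mixture
\[
f_{A_0}(y)=\sum_{m=1}^{+\infty}\mathbb{P}(M=m)\,f_{C_0}^{*m}(y)=\alpha\sum_{m=1}^{+\infty}(1-\alpha)^{m-1}f_{C_0}^{*m}(y),
\]
where $f_{C_0}^{*m}$ is the $m$-fold convolution of the renewal-cycle density. So everything reduces to a closed form for $f_{C_0}^{*m}$ followed by a resummation.

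The crux is to evaluate $f_{C_0}^{*m}$, and I would do this through moment generating functions. Since the $C_{0,i}$ are i.i.d., the MGF of the $m$-fold convolution is $[M_{C_0}(s)]^m$, with $M_{C_0}$ given in (\ref{fgmC0}); writing $p=(\lambda+\mu)/2-s$ one checks that $M_{C_0}(s)=(p-\sqrt{p^2-\lambda\mu})/\mu$. I would then recall the Laplace pair
\[
\int_{0}^{+\infty}e^{-pt}\,\frac{I_{m}(bt)}{t}\,{\rm d}t=\frac{1}{m}\left(\frac{p-\sqrt{p^2-b^2}}{b}\right)^{m},
\]
whose $m=1$ instance is exactly the integral already used for (\ref{fgmC0}). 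Taking $b=\sqrt{\lambda\mu}$ shows that the function $\frac{m}{y}(\lambda/\mu)^{m/2}e^{-(\lambda+\mu)y/2}I_m(y\sqrt{\lambda\mu})$ has MGF $[M_{C_0}(s)]^m$, so by uniqueness of the transform
\[
f_{C_0}^{*m}(y)=\frac{m}{y}\left(\frac{\lambda}{\mu}\right)^{m/2}e^{-\frac{(\lambda+\mu)}{2}y}\,I_m\!\left(y\sqrt{\lambda\mu}\right),\qquad y\in(0,\infty).
\]
Equivalently, one may use $C_0\stackrel{d}{=}2T_0$ from (\ref{relTC0}) together with the fact that $T_0$ is an M/M/1 busy period, whose $m$-fold convolution is the first-passage density from level $m$ to $0$; this yields the same $I_m$-expression. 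The $m=1$ case gives the first term in (\ref{pdfA0}).

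It then remains to match the resulting series with the hypergeometric form in (\ref{pdfA0}). I would substitute the convolution formula into the mixture, factor out $\alpha e^{-(\lambda+\mu)y/2}/y$, expand $I_m$ by (\ref{modBessel}), and set $w=\lambda\mu y^2/4$, so that each summand becomes $(\lambda y/2)^m\,mS_m(w)$ with $S_m(w)=\sum_{k\ge0}w^k/(k!(k+m)!)$. For $m\ge2$ I would verify
\[
(m-1)(m-1)!\,mS_m(w)=2m\,{}_{1}F_{2}\!\Big(\tfrac{m-1}{2};\tfrac{m+1}{2},m;w\Big)-(m+1)\,{}_{1}F_{2}\!\Big(\tfrac{m-1}{2};\tfrac{m+1}{2},m+1;w\Big)
\]
by comparing coefficients of $w^k$. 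Using the telescoping relation $((m-1)/2)_k/((m+1)/2)_k=(m-1)/(m-1+2k)$ from (\ref{Hyper1F2}), the right-hand coefficient of $w^k$, after reduction to the common denominator $(m+k)!$ and the simplification $m+2k-1=m-1+2k$, collapses to $(m-1)\,m!/(k!(m+k)!)$, which matches the left-hand side. Inserting this yields (\ref{pdfA0}).

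The main obstacle is the second step, the closed form for $f_{C_0}^{*m}$; once the $I_m$-representation is available, the rest is bookkeeping. The coefficient-matching of the last step is routine but needs care with the Pochhammer algebra and with the fact that the prefactor $1/[(m-1)(m-1)!]$ is singular at $m=1$; that case is a removable limit (there $a=(m-1)/2=0$ makes both ${}_1F_2$ equal to $1$), which is exactly why the $m=1$ term is displayed separately in (\ref{pdfA0}).
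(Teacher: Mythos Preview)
Your proof is correct and shares its starting point with the paper: both condition on $M$ to write $f_{A_0}$ as the geometric mixture $\alpha\sum_{m\ge 1}(1-\alpha)^{m-1}f_{C_0}^{*m}$ and then invert the Laplace/MGF term by term. The execution differs. You invoke the single Laplace pair $\int_0^\infty e^{-pt}t^{-1}I_m(bt)\,{\rm d}t=m^{-1}\big((p-\sqrt{p^2-b^2})/b\big)^m$ to identify $f_{C_0}^{*m}(y)=\frac{m}{y}(\lambda/\mu)^{m/2}e^{-(\lambda+\mu)y/2}I_m(y\sqrt{\lambda\mu})$ in one stroke, and then verify the ${}_1F_2$ identity in (\ref{pdfA0}) by matching coefficients of $w^k$ (your Pochhammer computation is clean and correct). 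The paper instead appeals to a Prudnikov inverse Laplace identity that represents $(s-\sqrt{s^2-a^2})^m$ as the transform of an integral combination of $I_{m-1}$ and $I_m$, and then evaluates that integral via another table entry to reach the hypergeometric form directly, never isolating the compact $I_m$ expression for $f_{C_0}^{*m}$. Your route is more elementary and more informative---it exhibits the $m$-fold convolution as a single Bessel density (equivalently, the M/M/1 first-passage density from level $m$), and the hypergeometric form is then seen as a rewriting rather than the primary output; the paper's route gets to (\ref{pdfA0}) with fewer intermediate formulas to verify but leans entirely on special-function tables. Your remark that the $m=1$ case must be separated because the prefactor $1/[(m-1)(m-1)!]$ is singular there is exactly right.
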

\begin{proof}
Denoting by
\begin{equation}
 {\cal L}_s[f(t)]=\int_{0}^{+\infty} {\rm e}^{-s t} f(t) {\rm d}t,\qquad s\in [0, \infty),
 \label{eq:defLsft}
\end{equation}
the Laplace transform of a 
certain  integrable function $f(t)$, from (\ref{fgmA0:bis}) we have
\begin{equation}
 {\cal L}_s[f_{A_0}(t)]= \sum_{m=1}^{+\infty} \left[ \frac{ (\lambda+\mu+2 s)-\sqrt{(\lambda+\mu+2 s)^2 -4 \lambda \mu}}{2 \mu} \right]^m \mathbb P(M=m).
 \label{LTpdfA0}
\end{equation}
We recall that, due to Eqs.\ (2.1.9.18) and (1.1.1.8) of \cite{Prudnikov5}, it is, for $a=2\sqrt{\lambda \mu}$,
\begin{eqnarray}
 & &  {\cal L}_s \left[\int_{0}^{t} \left\{\frac{m}{x} (2 \sqrt{\lambda \mu})^{m+1} 
 I_{m-1} (2 x \sqrt{\lambda \mu}) \right.\right.
 \nonumber \\
 & & \quad \left. \left. -\frac{m(m+1)}{x^2} (2 \sqrt{\lambda \mu})^{m} I_{m} (2 x \sqrt{\lambda \mu})
 \right\}{\rm d}x\right]
 =\left(s-\sqrt{s^2-a^2}\right)^m,
 \label{intLT1}
\end{eqnarray}
for $m\in \mathbb{N}$, $m\geq 2$, where $I_n(\cdot)$ is defined in (\ref{modBessel}). 
Moreover, from Eq.\ (1.11.1.1) of \cite{Prudnikov2}, we have 
\begin{eqnarray}
&& 
\int_{0}^{t} \left\{\frac{m}{x} (2 \sqrt{\lambda \mu})^{m+1} 
I_{m-1} (2 x \sqrt{\lambda \mu})-\frac{m(m+1)}{x^2} (2 \sqrt{\lambda \mu})^{m} I_{m} (2 x \sqrt{\lambda \mu})\right\}{\rm d}x
\nonumber \\
 &&  \hspace*{1.5cm} 
 =\frac{(2 \lambda \mu)^m t^{m-1}}{(m-1) (m-1)!}
 \left\{ 2m \;{}_{1}F_{2}\left(\frac{m-1}{2};\frac{m+1}{2},m;{\lambda \mu t^2}\right)\right.
 \nonumber
 \\
 && \hspace*{1.5cm} 
 \left. -(m+1) \;{}_{1}F_{2}\left(\frac{m-1}{2};\frac{m+1}{2},m+1;{\lambda \mu t^2}\right)
\right\}.
\label{intLT2}
\end{eqnarray}
Hence, from Eq.\ (\ref{LTpdfA0}), taking the inverse Laplace transformation, 
due to Eqs.\ (\ref{fgmC0}), (\ref{intLT1}) and (\ref{intLT2}), and recalling Eq.\ (1.1.1.4) of \cite{Prudnikov5}  
the proof finally follows.
\end{proof}
\par
In Figure 2 we provide some plots of the PDF $f_{A_0}(y)$ for various choices of $\alpha$. 
Such density is decreasing in $y$, with $f_{A_0}(0)=\alpha\lambda/2$. 
%
\begin{figure}[t]  
\begin{center}
\centerline{
\epsfxsize=6.5cm
\epsfbox{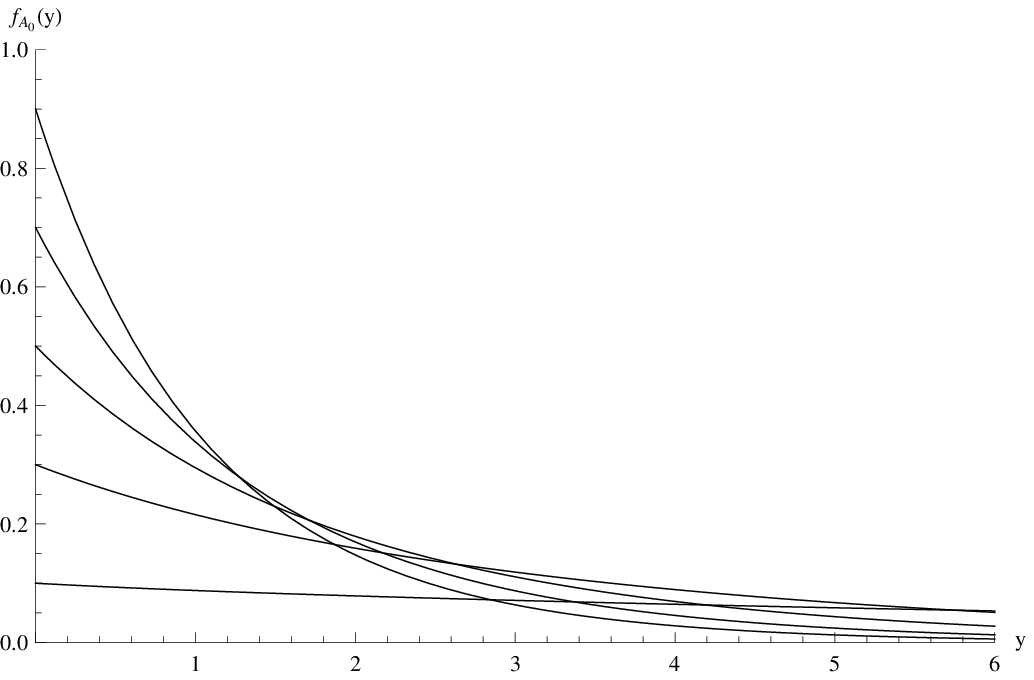}
\hspace*{0.2cm}
\epsfxsize=6.5cm
\epsfbox{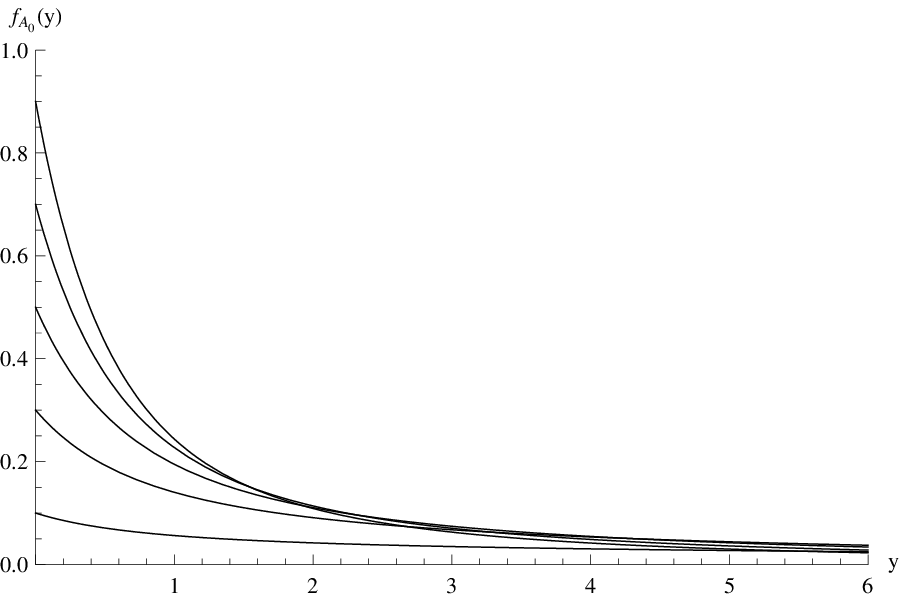}
}
\end{center}
\vspace{-0.5cm}
\caption{Density $f_{A_0}(y)$, given in (\ref{pdfA0}),
for $(\lambda,\mu)=(2,0.5)$ (left-hand side) and  $(\lambda,\mu)=(2,1.5)$ (right-hand side) 
with $\alpha=0.1,0.3,0.5,0.7,0.9$ from bottom to top near the origin.}
\end{figure}
%
\par
We conclude this section by evaluating the moments of the absorption time $A_0$. 
\begin{proposition}
Under the same assumptions of Proposition \ref{prop:psi0}, 
for $n\in \mathbb{N}$ the $n$th moment of $A_0$ is given by
\begin{eqnarray}
\mathbb E(A_0^n) &=&  \frac{2 \alpha \lambda \, n!} {[4 \lambda \alpha ( \mu+ \lambda (\alpha-1))]^{n+1} }
\Bigg\{[2 \mu + 2 \lambda (\alpha-1)][8 \lambda (\alpha-1)]^n
\nonumber
\\
&+& \sum_{h=1}^n [4 \lambda \alpha ( \mu+ \lambda (\alpha -1))]^{h} \,
  [8 \lambda (\alpha-1)]^{n-h}  \frac{\lambda \mu \, 2^{h+1}} {(\lambda+\mu)^{h+1} }
\nonumber
\\  
&\times &
 {}_{2}F_{1}\left(\frac{h+1}{2}, \frac{h+2}{2}; 2; \frac{4 \lambda \mu}{(\lambda+\mu)^2}\right)
\Bigg\}.
\label{momA0}
\end{eqnarray}
The mean  and the variance of $A_0$ are given by
$$
\mathbb E(A_0)=\frac{2}{\alpha(\lambda-\mu)},\qquad 
Var(A_0)=\frac{4[\lambda+\mu(2 \alpha-1)]}{\alpha^2 (\lambda-\mu)^3}.
$$
\end{proposition}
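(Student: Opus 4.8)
The plan is to read the moments off the moment generating function in (\ref{fgmA0}) rather than to differentiate $n$ times, the key idea being to first turn (\ref{fgmA0}) into a ratio whose \emph{denominator is linear in} $s$. First I would rationalise (\ref{fgmA0}), multiplying numerator and denominator by the conjugate quantity $2\lambda(\alpha-1)+(\lambda+\mu-2s)-\sqrt{(\lambda+\mu-2s)^2-4\lambda\mu}$. The product of conjugates removes the radical, and $\big[2\lambda(\alpha-1)+(\lambda+\mu-2s)\big]^2-\big[(\lambda+\mu-2s)^2-4\lambda\mu\big]$ collapses to a first-degree polynomial in $s$, so the denominator becomes exactly $4\lambda\alpha\big(\mu+\lambda(\alpha-1)\big)-8\lambda(\alpha-1)\,s$; this is precisely where the two constants appearing in (\ref{momA0}) come from. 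Writing $R:=\sqrt{(\lambda+\mu-2s)^2-4\lambda\mu}$ and using (\ref{fgmC0}) in the form $R=(\lambda+\mu-2s)-2\mu\,M_{C_0}(s)$, the new numerator reduces to $2\alpha\lambda\big[2\lambda(\alpha-1)+2\mu\,M_{C_0}(s)\big]$, so that
\[
M_{A_0}(s)=\frac{2\alpha\lambda\big[\,2\lambda(\alpha-1)+2\mu\,M_{C_0}(s)\,\big]}{4\lambda\alpha\big(\mu+\lambda(\alpha-1)\big)-8\lambda(\alpha-1)\,s}.
\]

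With this representation the moments come from a single Cauchy product. I would expand the reciprocal of the linear denominator as a geometric series in $s$ valid in a neighbourhood of the origin, and expand $M_{C_0}(s)=\sum_{j\ge 0}\mathbb{E}(C_0^{\,j})\,s^{j}/j!$, the latter legitimate on the interval of analyticity fixed in (\ref{fgmC0}). Multiplying the two series, extracting the coefficient of $s^{n}$ and multiplying by $n!$ then gives $\mathbb{E}(A_0^{\,n})$. Here the two constant pieces $2\lambda(\alpha-1)$ and $2\mu$ in the numerator contribute to a given power $s^{n}$ through the same $j=0$ term $\mathbb{E}(C_0^{0})=1$ of $M_{C_0}$, and they merge into the factor $2\mu+2\lambda(\alpha-1)$; this produces the isolated first summand of (\ref{momA0}), while the terms with $j=h\ge 1$ produce the sum.

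It then remains to insert the closed form (\ref{momC0}) of $\mathbb{E}(C_0^{\,h})$ and to use the elementary identity $2\mu\,\mathbb{E}(C_0^{\,h})/h!=\lambda\mu\,2^{\,h+1}(\lambda+\mu)^{-(h+1)}\,{}_{2}F_{1}\big(\tfrac{h+1}{2},\tfrac{h+2}{2};2;\tfrac{4\lambda\mu}{(\lambda+\mu)^2}\big)$, which turns each summand into the hypergeometric form of (\ref{momA0}); keeping $a':=4\lambda\alpha(\mu+\lambda(\alpha-1))$ and $\beta:=8\lambda(\alpha-1)$ uncancelled throughout makes the powers $a'^{\,h}\beta^{\,n-h}$ and the prefactor $2\alpha\lambda\,n!/a'^{\,n+1}$ appear automatically, so (\ref{momA0}) follows verbatim. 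Finally, the mean and variance are obtained either by specialising to $n=1,2$, or, more transparently, from the compound-geometric identities $\mathbb{E}(A_0)=\mathbb{E}(M)\,\mathbb{E}(C_0)$ and $Var(A_0)=\mathbb{E}(M)\,Var(C_0)+Var(M)\,[\mathbb{E}(C_0)]^2$, with $\mathbb{E}(M)=1/\alpha$ and $Var(M)=(1-\alpha)/\alpha^{2}$ from (\ref{distrM}) and the moments of $C_0$ in Proposition \ref{propdensC0}. I expect the only genuinely delicate step to be the rationalisation: one must verify that the radical cancels cleanly, that the surviving denominator is linear with the stated coefficients, and that the substitution $R=(\lambda+\mu-2s)-2\mu M_{C_0}(s)$ (equivalently the quadratic relation $\mu M_{C_0}(s)^2=(\lambda+\mu-2s)M_{C_0}(s)-\lambda$ satisfied by (\ref{fgmC0})) is used consistently; everything downstream is a routine series expansion whose only care is the bookkeeping that merges the $j=0$ term into the first summand.
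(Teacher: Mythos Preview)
Your proposal is correct and follows essentially the same route as the paper: the paper also rationalises (\ref{fgmA0}) to obtain the linear denominator $4\lambda\alpha(\mu+\lambda(\alpha-1))-8\lambda(\alpha-1)s$, recognises the numerator as $2\alpha\lambda[2\lambda(\alpha-1)+2\mu\,M_{C_0}(s)]$ via (\ref{fgmC0}), substitutes the moment expansion of $M_{C_0}$ from (\ref{momC0}), and then leaves the Cauchy product as ``straightforward calculations''. Your write-up simply spells out those calculations explicitly; the compound-geometric derivation of the mean and variance is a pleasant addition not in the paper, but is of course equivalent.
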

\begin{proof}
From Eqs.\ (\ref{fgmA0}), (\ref{fgmC0}) and  (\ref{momC0}) we have that the 
MGF of $A_0$, for $s<(\sqrt{\lambda}-\sqrt{\mu})^2/{2}$ can be rewritten as
\begin{eqnarray*}
&&  \hspace{-0.2cm}
M_{A_0}(s)=\frac{2 \alpha \lambda\left[2 \lambda (\alpha-1)+(\lambda+\mu-2 s)-\sqrt{(\lambda+\mu-2 s)^2 -4 \lambda \mu}   \right]}{4 \lambda \alpha ( \mu+ \lambda (\alpha-1))-8 \lambda (\alpha-1) s}
\\
&&  
=\frac{2 \alpha \lambda\left[2 \lambda (\alpha-1)+2 \mu +\displaystyle{\sum_{r=1}^{+\infty} \frac{2^{r+1} \lambda \mu}{(\lambda+\mu)^{r+1}}\;{}_{2}F_{1}\left(\frac{r+1}{2}, \frac{r+2}{2}; 2; \frac{4 \lambda \mu}{(\lambda+\mu)^2}\right) s^r}
 \right]}{4 \lambda \alpha ( \mu+ \lambda (\alpha-1))-8 \lambda (\alpha-1) s}.
\end{eqnarray*}
Hence, the proof follows after straightforward calculations.
\end{proof}
%
\section{\bf  Absorption time and renewal cycles for non-zero initial state}\label{section:4}
%
In this section we obtain the distribution of the renewal cycles in the case $x\in (0, \infty)$. 
We first determine the PDF of the first-passage-time (\ref{Txdef}) for non-zero initial state. 
\begin{proposition}
Under assumptions (\ref{defF}) and (\ref{defG}), for $0<\mu <\lambda$, the PDF of $T_x$, $x>0$, 
for $t\in (0, \infty)$ is given by
\begin{eqnarray}
&& \hspace*{-0.2cm}
\psi_x(t)= \lambda e^{-(\lambda + \mu) t}  e^{-\mu x} \left\{ I_0(2 \sqrt{\lambda \mu t (t+x)})
+\frac{1}{2} \sum_{r=0}^{+\infty} \frac{(\lambda \mu t x)^r}{r! (r+1)!}
\sum_{j=0}^r {r \choose j}  \right.
\nonumber
\\
&& \hspace*{0.2cm}
\left. 
\times (j+r+1) \left(\frac{t}{x}\right)^j 
\left[-1+  {}_{1}F_{2}\left(-\frac{1}{2};\frac{(j+r+1)}{2},1+\frac{(j+r)}{2};\lambda \mu t^2\right)\right]\right\},
\qquad \ 
\label{psixexp_pos}
\end{eqnarray}
where $I_0(\cdot)$ and ${}_{1}F_{2}(a;b,c;\cdot)$ are defined in Eqs.\ (\ref{modBessel}) 
and (\ref{Hyper1F2}), respectively. 
\label{propdensTxpos}
\end{proposition}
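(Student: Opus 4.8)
The plan is to substitute the exponential densities into the general first-passage formula (\ref{psix}), using the explicit subdensity $g_x(y,t)$ from (\ref{gxdef}), and then reduce every resulting integral by the same Bessel-function and hypergeometric identities already employed in the $x=0$ case. Writing $\overline{G}(s)=e^{-\mu s}$, the boundary term of (\ref{psix}) is immediately $\lambda e^{-(\lambda+\mu)t}e^{-\mu x}$, while the integral $\int_0^{t+x} g_x(y,t)e^{-\mu(t-y+x)}\,dy$ decomposes, following the three summands of (\ref{gxdef}), into the combined $h(y;t)$ contribution over the full range $(0,x+t)$, a single integral $J_2$ carrying $h(y;y-x)$, and the double (convolution) integral $J_3$.

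The first and easiest step handles the $h(y;t)$ pieces. Inserting (\ref{hexp}), the product $h(y;t)e^{-\mu(t-y+x)}$ loses its $e^{-\mu y}$ factor, so the only $y$-dependence left is $y^{-1/2}I_1(2\sqrt{\lambda\mu t y})$. The substitution $w=\sqrt{y}$ together with $\frac{d}{dw}I_0(aw)=aI_1(aw)$ makes this integral elementary, yielding a term proportional to $I_0(2\sqrt{\lambda\mu t(t+x)})-1$. The constant $-1$ cancels the boundary term $\lambda e^{-(\lambda+\mu)t}e^{-\mu x}$ exactly, leaving the leading summand $\lambda e^{-(\lambda+\mu)t}e^{-\mu x}I_0(2\sqrt{\lambda\mu t(t+x)})$ of (\ref{psixexp_pos}).

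Next I would treat $J_2$ and $J_3$. A crucial simplification is that in both integrals the exponentials collapse to the common constant prefactor $e^{-(\lambda+\mu)t}e^{-\mu x}$: for $J_2$ this follows after the substitution $v=y-x$, and for $J_3$ it follows because the exponent of the two $h$-factors first collapses to $e^{-\lambda t-\mu y}$ independently of the inner variable $u$, and then, combined with the outer weight $e^{-\mu(t-y+x)}$, to $e^{-(\lambda+\mu)t}e^{-\mu x}$. This already produces the correct prefactor of (\ref{psixexp_pos}). What remains in $J_2$ is $\int_0^{t}\frac{\sqrt{v}}{\sqrt{v+x}}I_1(2\sqrt{\lambda\mu v(v+x)})\,dv$, which I evaluate by expanding $I_1$ through (\ref{modBessel}), applying the binomial theorem to $(v+x)^k$, and integrating the resulting monomials. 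For $J_3$ the algebraic miracle is that after the substitution $p=u-(t+x-y)$ all the square-root factors from the two $h$-densities combine to the single polynomial factor $(y-x-p)$; expanding both Bessel series then leaves elementary polynomial integrals in $p$ and afterwards in $y$, which I evaluate term by term and recognize, via the same ${}_1F_2$ integral identity $(1.11.1.1)$ of \cite{Prudnikov2} used for $x=0$, as hypergeometric partial sums.

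The main obstacle is the final bookkeeping: the double sums coming from $J_2$ and $J_3$ must be re-indexed and recombined into the single structure of (\ref{psixexp_pos}), with the combined power $(\lambda\mu t x)^r$, the binomial split $\binom{r}{j}(t/x)^j$, the weight $(j+r+1)$, and, most delicately, the bracket $[-1+{}_1F_2(-\tfrac{1}{2};\tfrac{j+r+1}{2},1+\tfrac{j+r}{2};\lambda\mu t^2)]$. Neither $J_2$ nor $J_3$ matches this bracket on its own; the constant $-1$ and the full ${}_1F_2$ series emerge only after summing the two contributions and using rising-factorial identities to collapse the parameters, the overall minus sign of $-\lambda(J_2+J_3)$ accounting for the negative bracket (consistent with ${}_1F_2(-\tfrac{1}{2};\cdot,\cdot;z)<1$ for $z>0$). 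Carrying out this re-summation cleanly, rather than the individual integral evaluations, is where the real work lies.
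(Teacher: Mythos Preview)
Your overall architecture matches the paper's: start from (\ref{psix}) with $\overline G(s)=e^{-\mu s}$, combine the boundary term with the $h(y;t)$ contribution over $(0,t+x)$ to produce the $I_0(2\sqrt{\lambda\mu t(t+x)})$ term (exactly as you describe, via $\int_0^{t+x}y^{-1/2}I_1(2\sqrt{\lambda\mu ty})\,dy=(\lambda\mu t)^{-1/2}[I_0-1]$), and then deal separately with the $h(y;y-x)$ piece and the convolution piece. Your observation that all exponentials collapse to the common prefactor $e^{-(\lambda+\mu)t-\mu x}$ is also exactly what the paper uses.

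The genuine difference is in how the convolution piece $J_3$ is handled and how the final form is reached. The paper does \emph{not} attack the double integral directly: it first evaluates the inner $u$-integral in (\ref{gxdef}) (via a Prudnikov identity) to obtain an explicit series for $g_x(y,t)$ involving Gauss hypergeometric functions ${}_2F_1$, and only then integrates in $y$, using a further ${}_2F_1$ integral identity. This yields an intermediate expression (equation (\ref{psixdue}) in the paper) entirely in terms of ${}_2F_1$'s, which is then converted to the stated ${}_1F_2$ form by a last round of special-function identities. Your route---expand both Bessel series, integrate the resulting polynomials in $p$ and $y$, and recognise ${}_1F_2$ at the end---is more elementary and avoids ${}_2F_1$ as an intermediate, but pushes all the difficulty into the final re-indexing step you already flagged. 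The paper's approach trades that bookkeeping for repeated appeals to tabulated identities; either way the ``hard part'' is the last reorganisation, and both routes are valid. (One small caution: your remark that the square-root prefactors in $J_3$ ``combine to the single polynomial factor $(y-x-p)$'' is not literally true before expanding; what is true is that once each $\sqrt{t}\,y^{-1/2}I_1(2\sqrt{\lambda\mu ty})$ is written as the power series $\sum_k(\lambda\mu t)^{k+1}y^k/[k!(k+1)!]$, all half-integer powers disappear and the integrand becomes polynomial in $p$.)
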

\begin{proof}
Substituting (\ref{hexp}) in Eq.\ (\ref{gxdef}), considering the series form of $I_1$, and making use 
of Eq. (2.2.6.1) of \cite{Prudnikov1}, we have
\begin{eqnarray*} 
 g_x(y,t) &=& {\bf 1}_{\{0<y\leq x\}} {\rm e}^{-\lambda t -\mu y} 
 \frac{\sqrt{\lambda \mu t}}{\sqrt{y}} I_1(2 \sqrt{\lambda \mu t y})
 +{\bf 1}_{\{x<y<x+t\}}{\rm e}^{-\lambda t -\mu y} 
 \\
 & \times &  
 \Bigg\{ \frac{\sqrt{\lambda \mu t}}{\sqrt{y}} I_1(2 \sqrt{\lambda \mu t y})- \frac{\sqrt{\lambda \mu (y-x)}}{\sqrt{y}} 
 I_1(2 \sqrt{\lambda \mu (y-x) y}) 
 \\
 &- &  \sum_{k=0}^{+\infty} \sum_{r=0}^{+\infty} \sum_{j=0}^k  \frac{ y^r}{r!  } (j+k)!  
  \frac{(\lambda \mu)^{k+r+2} (t+x-y)^{k+1-j} (y-x)^{k+r+j+2}}
 {(k+r+j+2)! (k+1)! j! (k-j)!}  
 \\
 &\times &  
  \, {}_{2}F_{1}\left(j+k+1,-r; j+k+r+3; \frac{y-x}{y} \right)\Bigg\},
\end{eqnarray*}
with ${}_{2}F_{1}(a,b;c;\cdot)$ defined in Eq.\ (\ref{Hyper2F1}). Hence, using the above expression of 
$g_x(y,t)$ in Eq.\ (\ref{psix}), and recalling that $\overline G(t)={\rm e}^{-\mu t}$, $t\in [0, \infty)$, for 
$0< \mu< \lambda$ (due to (\ref{defG})), we obtain 
\begin{eqnarray}
 \psi_x(t) &= & \lambda {\rm e}^{-(\lambda+\mu) t -\mu x}
 \Bigg\{ 1+\sqrt{\lambda \mu t} \int_{0}^{t+x} \frac{I_1(2 \sqrt{\lambda \mu t y})}{\sqrt{y}} {\rm d}y
 \nonumber \\
 &- & \int_{x}^{t+x} \frac{\sqrt{\lambda \mu (y-x)}  I_1(2 \sqrt{\lambda \mu (y-x)y})}{\sqrt{y}} {\rm d}y
 \nonumber \\ 
 & - &   \sum_{k=0}^{+\infty} \sum_{r=0}^{+\infty} \sum_{j=0}^k \frac{(\lambda \mu)^{k+r+2} (j+k)! }
 {(k+r+j+2)! (k+1)! j! r!(k-j)!}  
 \nonumber  \\
  &\times &   \int_{x}^{t+x} y^r (t+x-y)^{k+1-j} (y-x)^{k+r+j+2}
 \nonumber  \\
  &\times &  \, {}_{2}F_{1}\left(j+k+1,-r; j+k+r+3; \frac{y-x}{y}\right) {\rm d}y \Bigg\},
\label{psixuno}
\end{eqnarray}
for $t\in (0, \infty)$. 
Due to Eq.\ (1.11.1.1) of \cite{Prudnikov2} and  Eq.\ (7.14.2.84) of \cite{Prudnikov3}, we have
\begin{equation}
\int_{0}^{t+x} \frac{I_1(2 \sqrt{\lambda \mu t y})}{\sqrt{y}} {\rm d}y=\frac{1}{\sqrt{\lambda \mu t}} \left[I_0(2\sqrt{\lambda \mu t (t+x)}) -1 \right],
\label{intpsix1}
\end{equation}
whereas, from  Eq.\ (2.2.6.1) of \cite{Prudnikov1}, it is 
\begin{eqnarray}
&&\hspace*{-0.5cm}
\int_{x}^{t+x} \frac{\sqrt{\lambda \mu y (y-x)} I_1(2 \sqrt{\lambda \mu (y-x) y})}{\sqrt{y}} {\rm d}y=
\sum_{k=0}^{+\infty} \frac{(\lambda \mu)^{k+1}}{k!(k+1)!} \int_{0}^t (x+z)^k z^{k+1} {\rm d}z
\nonumber
 \\
&&\hspace*{1.8cm}
=\lambda \mu t^2 \sum_{k=0}^{+\infty} \frac{(\lambda \mu t x)^{k}}{k!(k+2)!}  \, {}_{2}F_{1}\left(-k,k+2; k+3; -\frac{t}{x}\right).
\label{intpsix2}
\end{eqnarray}
Moreover, recalling Eq.\ (2.21.1.4) of \cite{Prudnikov3}, we get
\begin{eqnarray}
&&\hspace*{-0.2cm}
\int_{x}^{t+x} y^r (t+x-y)^{k+1-j} (y-x)^{k+r+j+2} 
\nonumber \\
&&\hspace*{0.2cm}
 \times \,  {}_{2}F_{1}\left(j+k+1,-r; j+k+r+3; \frac{y-x}{y}\right) {\rm d}y
\nonumber
 \\
&&\hspace*{0.2cm}
=\frac{t^{2 k+r+4} x^r  (k+1-j)! (k+r+j+2)!}{(2k+r+4)!} \,{}_{2}F_{1}\left(-r,r+2; 2k+r+5;-\frac{t}{x} \right). 
\nonumber 
\\
\label{intpsix3}
\end{eqnarray}
Hence, making use of Eqs.\ (\ref{intpsix1}), (\ref{intpsix2}) and (\ref{intpsix3}) in 
Eq.\ (\ref{psixuno}), for $t\in (0, \infty)$ we obtain 
\begin{eqnarray}
&& \psi_x(t) =  \lambda {\rm e}^{-(\lambda+\mu) t -\mu x}\Bigg\{ I_0(2\sqrt{\lambda \mu t (t+x)}) 
 \nonumber 
\\
&&\quad -  \lambda \mu t^2 \sum_{k=0}^{+\infty} \frac{(\lambda \mu t x)^{k}}{k!(k+2)!} \;  {}_{2}F_{1}\left(-k,k+2; k+3; -\frac{t}{x}\right)
 \nonumber
\\
&& \quad -   2 \sum_{k=0}^{+\infty} \sum_{r=0}^{+\infty} \frac{(\lambda \mu)^{k+r+2} t^{2 k+r+4} x^r (2k+1)! }
 {(k+2)!k! r!(2k+r+4)!}  \,{}_{2}F_{1}\left(-r,r+2;2k+r+5; -\frac{t}{x} \right) \Bigg\}.
 \nonumber \\
 \label{psixdue}
\end{eqnarray}
Finally, recalling the integral form of the Gauss Hypergeometric function (see, for instance, 
Eq.\ 15.3.1 of Abramowitz and Stegun \cite{Abr}), and making use of Eq.\ (7.2.1.2) 
of \cite{Prudnikov3} and  Eq.\ (2.15.1.1) of \cite{Prudnikov2}, 
the proof follows from (\ref{psixdue}) after some calculations. 
\end{proof}
\par
We are now able to obtain the PDF of the first renewal cycle when $x\in (0, \infty)$. 
\begin{proposition}
Under the same assumptions of Proposition \ref{propdensTxpos}, the PDF of $C_x$ 
for $y>x$ is given by
\begin{eqnarray}
&& \hspace*{-0.5cm}
f_{C_x}(y)= \frac{1}{2} \lambda e^{-\lambda \frac{y-x}{2}-\mu \frac{y+x}{2}} 
\Bigg\{ I_0\left(\sqrt{\lambda \mu (y^2-x^2)}\right)
\nonumber
\\
&& \hspace*{0.5cm}
+\frac{1}{2} \sum_{r=0}^{+\infty} \frac{(\lambda \mu {x(y-x)}/{2})^r}{r! (r+1)!}
\sum_{j=0}^r {r \choose j} (j+r+1) \left(\frac{y-x}{2 x}\right)^j  
\nonumber
\\
&& \hspace*{0.5cm}
\times \left[-1+ {}_{1}F_{2}\left(-\frac{1}{2};\frac{(j+r+1)}{2},1+\frac{(j+r)}{2};\lambda \mu \left(\frac{y-x}{2}\right)^2\right)\right]
\Bigg\}.
\label{densCxexp}
\end{eqnarray}
\end{proposition}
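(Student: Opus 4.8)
The plan is to obtain $f_{C_x}$ directly from Proposition \ref{propdensTxpos} by a change of variables, exploiting the distributional identity (\ref{relTCx}), namely $C_x \stackrel{d}{=} x + 2T_x$. Since the hypothesis $0<\mu<\lambda$ guarantees that $T_x$ is a proper random variable carrying an absolutely continuous law on $(0,\infty)$, the affine map $t \mapsto y = x+2t$ is an increasing bijection from $(0,\infty)$ onto $(x,\infty)$, with inverse $t=(y-x)/2$ and constant Jacobian ${\rm d}t/{\rm d}y = 1/2$. The standard density-transformation rule then yields
\begin{equation}
f_{C_x}(y) = \frac{1}{2}\,\psi_x\!\left(\frac{y-x}{2}\right), \qquad y>x,
\end{equation}
so that the entire task reduces to inserting $t=(y-x)/2$ into the explicit formula (\ref{psixexp_pos}) and simplifying each constituent piece.

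The remaining steps are purely a matter of bookkeeping. First I would rewrite the exponential prefactor: substituting $t=(y-x)/2$ into $\lambda\, e^{-(\lambda+\mu)t-\mu x}$ and collecting terms gives $\lambda\, e^{-\lambda (y-x)/2-\mu (y+x)/2}$, which together with the Jacobian $1/2$ reproduces the leading factor of (\ref{densCxexp}). Next I would treat the Bessel term: since $t+x=(y+x)/2$, one has $t(t+x)=(y^2-x^2)/4$, so that $2\sqrt{\lambda\mu\,t(t+x)}=\sqrt{\lambda\mu(y^2-x^2)}$, converting $I_0(2\sqrt{\lambda\mu\,t(t+x)})$ into $I_0(\sqrt{\lambda\mu(y^2-x^2)})$. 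Finally, the series term of (\ref{psixexp_pos}) transforms factor by factor: $(\lambda\mu t x)^r \to (\lambda\mu\, x(y-x)/2)^r$, the power $(t/x)^j \to ((y-x)/(2x))^j$, and the hypergeometric argument $\lambda\mu t^2 \to \lambda\mu((y-x)/2)^2$, while the combinatorial weights $\binom{r}{j}(j+r+1)/(r!(r+1)!)$ and the bracketed factor $[-1+{}_{1}F_{2}(\cdots)]$ are unaffected by the substitution.

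Because the change of variables acts independently on each of these components, no genuine analytic difficulty arises and no further integration or special-function identity is needed beyond Proposition \ref{propdensTxpos} itself; in this sense there is no real ``hard part.'' The only point demanding a little care is the domain: the support condition $t>0$ must be tracked through the map $y=x+2t$, which translates into $y>x$, so that the resulting density is correctly stated only for $y>x$ (and vanishes for $0<y<x$, since the cycle length cannot fall below $x$). Assembling the three simplified pieces under the common prefactor then reproduces (\ref{densCxexp}) exactly, completing the proof.
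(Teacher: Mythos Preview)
Your proposal is correct and follows exactly the same approach as the paper: the paper's proof consists of the single sentence ``The proof immediately follows from Proposition \ref{propdensTxpos}, and recalling Eq.\ (\ref{relTCx}),'' i.e.\ use $C_x\stackrel{d}{=}x+2T_x$ and change variables in $\psi_x$. You have simply spelled out the substitution $t=(y-x)/2$ and the resulting simplifications in more detail than the paper does.
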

\begin{proof}
The proof immediately follows from Proposition \ref{propdensTxpos}, and recalling Eq.\ (\ref{relTCx}).
\end{proof}
\par
Some plots of the PDF $f_{C_x}(y)$ are provided in Figure 3. We note that 
$f_{C_x}(x)=\lambda {\rm e}^{-\mu x}/2$, $x\in (0, \infty)$. 
\begin{figure}[t]  
\begin{center}
\centerline{
\epsfxsize=6.5cm
\epsfbox{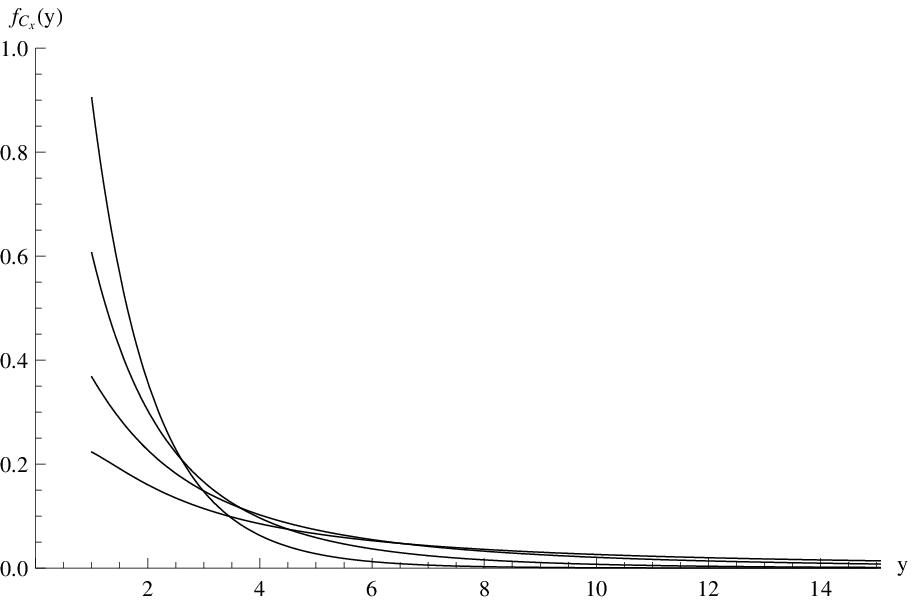}
\hspace*{0.2cm}
\epsfxsize=6.5cm
\epsfbox{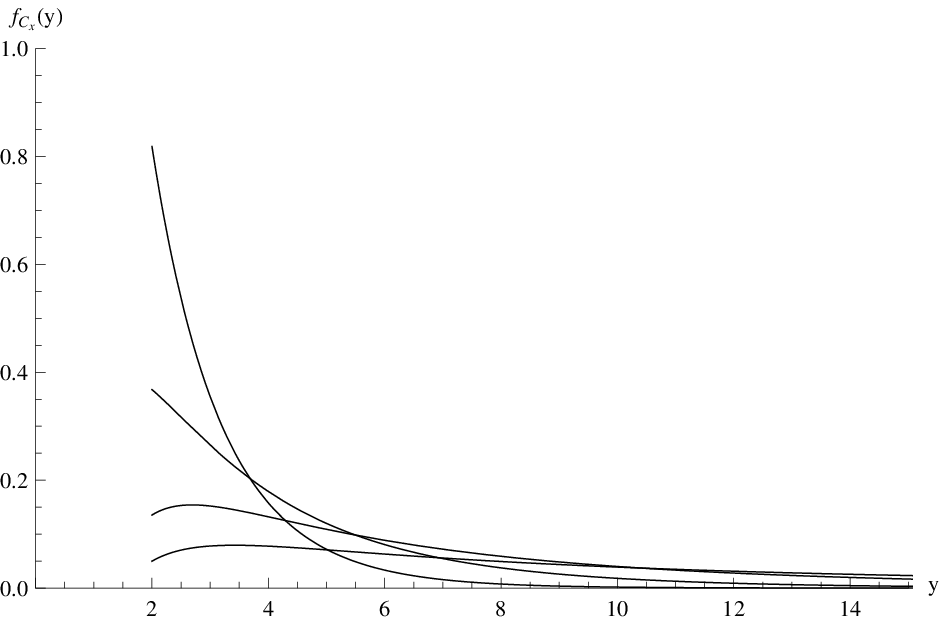}
}
\end{center}
\vspace{-0.5cm}
\caption{Density $f_{C_x}(y)$, given in (\ref{densCxexp}),
for $\lambda=2$, $x=1$ (left-hand side) and $\lambda=2$, $x=2$ (right-hand side) 
with $\mu=0.1$, $0.5$, $1$, $1.5$ from top  to bottom near the origin.}
\end{figure}
%
\begin{remark}
It is not hard to show that if $x\to 0^+$, then the PDF of $T_x$, given in  (\ref{psixexp_pos}), 
tends to  the PDF of $T_0$, shown in (\ref{psi0exp}). Indeed, by virtue of 
Eq.\ (2.15.1.1) of \cite{Prudnikov2}, for any fixed $t\in (0,\infty)$, we have 
\begin{eqnarray}
\lim_{x\to 0^+}\psi_x(t) & =& \lambda e^{-(\lambda + \mu) t}   \Big\{ I_0\left(2 t\sqrt{\lambda \mu}\right)
\nonumber
\\
&-& \int_0^1 \frac{1}{z}\,I_1\left(2 t z \sqrt{\lambda \mu}\right)
I_1\left(2 t (1-z)\sqrt{\lambda \mu}\right){\rm d}z\Big\}.
\nonumber
\end{eqnarray}
The latter expression is identical to (\ref{psi0exp}), due to  Eq.\ (2.15.19.9) of \cite{Prudnikov2} and  the 
following well-known recurrence relation for the Bessel function (see, for instance, (9.6.26) of \cite{Abr}): 
$I_{n-1}(z)-I_{n+1}(z)=(2n/z)I_{n}(z)$. 
\par
Similarly, one can show that if $x\to 0^+$, then the PDF  given in  (\ref{densCxexp}) 
tends to  the PDF   (\ref{fc0}).   
\end{remark}
\par
In the following proposition we obtain the MGF of the first-passage-time defined in 
(\ref{Txdef}).
\begin{proposition}
Under the same assumptions of Proposition \ref{propdensTxpos},  for $s<(\sqrt{\lambda}-\sqrt{\mu})^2$ we have 
\begin{eqnarray}
M_{T_x}(s):=\mathbb E({\rm e}^{s T_x}) 
&= & \frac{\lambda+\mu- s- \sqrt{(\lambda+\mu-s)^2-4 \lambda \mu}}{2 \mu} 
\nonumber \\
&\times & {\rm e}^{\frac{x}{2} \left[\lambda-\mu-s- \sqrt{(\lambda+\mu-s)^2-4 \lambda \mu} \right]}.
\label{fgmTxpos}
\end{eqnarray}
\end{proposition}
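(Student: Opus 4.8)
The plan is to avoid integrating the cumbersome density (\ref{psixexp_pos}) directly and instead to exploit the spatial homogeneity of the auxiliary process $W(t):=Y(t)-t$, with $Y$ as in (\ref{Ydef}), for which $T_x=\inf\{t>0:W(t)\ge x\}$ by (\ref{Txdef}). Since $Y$ increases only by the i.i.d.\ jumps $D_n\sim\mathrm{Exp}(\mu)$ arriving at the Poisson rate $\lambda$ and decreases linearly between them, $W$ is a spatially homogeneous (L\'evy) process; in particular the level $x>0$ can be attained only at a jump epoch. First I would set up a first-step (renewal) equation for $M_{T_x}(s)$ by conditioning on the first upward period $U_1\sim\mathrm{Exp}(\lambda)$ and the first jump $D_1\sim\mathrm{Exp}(\mu)$: on $[0,U_1)$ one has $W(t)=-t<x$, while at $t=U_1$ the process jumps to $W(U_1)=D_1-U_1$. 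If $D_1-U_1\ge x$ then $T_x=U_1$; otherwise the strong Markov property and spatial homogeneity give that the residual passage time is an independent copy of $T_{x-(D_1-U_1)}$, whose index $x-(D_1-U_1)$ is automatically positive on this event.

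Writing $e^{sT_x}=e^{sU_1}$ in the first case and $e^{sT_x}=e^{sU_1}e^{sT'_{x-(D_1-U_1)}}$ in the second, this yields the integral equation
\begin{equation*}
M_{T_x}(s)=\mathbb E\!\left[e^{sU_1}\Big(\mathbf{1}_{\{D_1-U_1\ge x\}}+\mathbf{1}_{\{D_1-U_1<x\}}\,M_{T_{x-(D_1-U_1)}}(s)\Big)\right].
\end{equation*}
The convolution structure in $x$ (an exponential kernel from $D_1$ against an exponential weight from $U_1$) suggests the ansatz $M_{T_x}(s)=A(s)\,e^{x\kappa(s)}$. Substituting it, carrying out the two elementary exponential integrals over the jump size and the up-period, and demanding that the resulting combination of $e^{-\mu x}$ and $e^{x\kappa}$ reproduce $A\,e^{x\kappa}$ for every $x>0$, the $e^{-\mu x}$ term forces $\kappa=\mu(A-1)$ while the $e^{x\kappa}$ term forces $\mu A^2-(\lambda+\mu-s)A+\lambda=0$. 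This last quadratic is exactly the relation already satisfied by $M_{C_0}(s)$ in (\ref{fgmC0}) (equivalently $A=M_{T_0}(s)=M_{C_0}(s/2)$ by (\ref{relTC0})), so its admissible root is $A(s)=\big[(\lambda+\mu-s)-\sqrt{(\lambda+\mu-s)^2-4\lambda\mu}\big]/(2\mu)$, whence $\kappa(s)=\tfrac12\big[\lambda-\mu-s-\sqrt{(\lambda+\mu-s)^2-4\lambda\mu}\big]$, and $A(s)\,e^{x\kappa(s)}$ is precisely (\ref{fgmTxpos}).

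The step I expect to be the main obstacle is the rigorous justification of the first-step decomposition rather than the ensuing algebra: one must argue carefully that $T_x$ is reached at a jump epoch, that $W$ regenerates as a fresh independent copy after the stopping time $U_1$, and that the post-jump index stays positive on the complementary event so that only well-defined passage levels occur in the recursion. Regarding the range of $s$, the two exponential integrals converge once $s<\lambda+\mu$ and $s<\lambda-\kappa(s)$, but the binding constraint is nonnegativity of the discriminant $(\lambda+\mu-s)^2-4\lambda\mu$, which holds exactly for $s\le(\sqrt\lambda-\sqrt\mu)^2$, the stated domain. As a consistency check, and as an alternative route faithful to the computational style of the paper, one could instead evaluate $\int_0^{\infty}e^{st}\psi_x(t)\,{\rm d}t$ term by term from (\ref{psixexp_pos}) using the Prudnikov tables; this is feasible but far more laborious, and it must collapse to the same closed form.
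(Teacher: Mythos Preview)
Your argument is correct and takes a genuinely different route from the paper. The paper establishes (\ref{fgmTxpos}) by brute-force Laplace transformation of the explicit density (\ref{psixexp_pos}): it treats the $I_0$ term and the double series separately via tabulated transforms from Prudnikov and Erd\'elyi, then collapses the resulting sum through a generating-function identity for generalized Laguerre polynomials ${\mathbf L}_j^{j+1}$. That is precisely the ``laborious'' alternative you mention in your last paragraph as a consistency check. Your approach bypasses (\ref{psixexp_pos}) altogether by using the L\'evy structure of $W(t)=Y(t)-t$: the first-step decomposition at $U_1$ produces a renewal equation in $x$, and the exponential ansatz solves it with two elementary integrals. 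This is far shorter, and it makes transparent \emph{why} the MGF factorises as $M_{T_0}(s)$ times an exponential in $x$---a structural fact that the paper's special-function manipulations leave opaque.

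One comment on where the real gap lies. The obstacle you flag---regeneration at $U_1$ and positivity of the residual level---is routine: $U_1$ is a stopping time for the compound Poisson process, the strong Markov property applies directly, and $x-(D_1-U_1)>0$ is immediate on the complementary event. What actually needs a line of justification is the \emph{ansatz}: you verify that $A\,e^{\kappa x}$ satisfies the renewal equation, but not that $M_{T_x}(s)$ must have this form. The cleanest fix is the overshoot observation: since the jumps are $\mathrm{Exp}(\mu)$, the excess $W(T_x)-x$ is $\mathrm{Exp}(\mu)$ independently of $T_x$, so by the strong Markov property $M_{T_{x+y}}(s)/M_{T_x}(s)$ depends only on $y$, forcing $x\mapsto M_{T_x}(s)$ to be exponential. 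Alternatively, differentiating your renewal identity twice in $x$ (the exponential kernels make this elementary) yields a constant-coefficient second-order ODE whose characteristic roots are $\tfrac12\big[\lambda-\mu-s\pm\sqrt{(\lambda+\mu-s)^2-4\lambda\mu}\big]$; the $+$ root is excluded by matching $M_{T_x}(0)=1$ and continuity in $s$, and the boundary value $M_{T_0}(s)$ from (\ref{fgmC0}) fixes the constant.
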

\begin{proof}
Recalling (\ref{eq:defLsft}), from Eq.\ (\ref{psixexp_pos}) we have 
\begin{eqnarray*}
&& \hspace*{-0.5cm}
M_{T_x}(s)=\lambda {\rm e}^{-\mu x}  {\cal L}_{\lambda+\mu-s}\left[I_0\left(2 \sqrt{\lambda \mu} \sqrt{t^2+t x}\right)\right]
\nonumber
\\
&& \hspace*{0.7cm} 
+\frac{1}{2} \lambda {\rm e}^{-\mu x}   \sum_{r=0}^{+\infty} \frac{(\lambda \mu)^r}{r! (r+1)!}\sum_{j=0}^r {r \choose j} (j+r+1) x^{r-j} 
\nonumber
\\
&& \hspace*{0.7cm} 
\times {\cal L}_{\lambda+\mu-s}\left[t^{r+j} \left(-1+ {}_{1}F_{2}\left(-\frac{1}{2};\frac{(j+r+1)}{2},1+\frac{(j+r)}{2};\lambda \mu t^2\right)\right)\right].
\end{eqnarray*}
Hence, due to Eqs.\ (3.15.3.1) of \cite{Prudnikov4} and (4.23.17) of \cite{Erdelyi1}, we have for $s<(\sqrt{\lambda}-\sqrt{\mu})^2$
\begin{eqnarray*}
&& \hspace*{-0.5cm}
M_{T_x}(s)=\lambda {\rm e}^{-\mu x} \frac{{\rm e}^{\frac{x}{2} \left[\lambda+\mu-s- \sqrt{(\lambda+\mu-s)^2-4 \lambda \mu} \right]}}
{\sqrt{(\lambda+\mu-s)^2-4 \lambda \mu}}+\frac{1}{2} \lambda {\rm e}^{-\mu x}  \left[-1+\sqrt{1-\frac{4 \lambda \mu}{(\lambda+\mu-s)^2}}    \right]
\nonumber
\\
&& \hspace*{0.7cm} 
\times \frac{1}{\lambda+\mu-s} \sum_{r=0}^{+\infty} \left(\frac{\lambda \mu x}{\lambda+\mu-s} \right)^r \sum_{j=0}^{r} \left(\frac{1}{x(\lambda+\mu-s)}\right)^j {j+r+1 \choose j}
\frac{1}{(r-j)!}.
\end{eqnarray*}
Consequently, due to Eqs.\ (7.2.2.1), (7.2.2.8) and (7.11.1.15) of  \cite{Prudnikov3}, we obtain
\begin{eqnarray*}
&& \hspace*{-0.5cm}
M_{T_x}(s)=\lambda {\rm e}^{-\mu x} \frac{{\rm e}^{\frac{x}{2} \left[\lambda+\mu-s- \sqrt{(\lambda+\mu-s)^2-4 \lambda \mu} \right]}}
{\sqrt{(\lambda+\mu-s)^2-4 \lambda \mu}}+\frac{1}{2} \lambda {\rm e}^{-\mu x}  \left[-1+\sqrt{1-\frac{4 \lambda \mu}{(\lambda+\mu-s)^2}}    \right]
\nonumber
\\
&& \hspace*{0.7cm} 
\times \frac{1}{\lambda+\mu-s} {\rm e}^{\lambda \mu x/(\lambda+\mu-s)}
\sum_{j=0}^{+\infty} \left[\frac{\lambda \mu}{(\lambda+\mu-s)^2}  \right]^j {\mathbf L}_{j}^{j+1} \left(-\frac{\lambda \mu x}{\lambda+\mu-s}  \right),
\end{eqnarray*}
where ${\mathbf L}_{n}^{\beta}$, $n\in \mathbb{N}$, denotes the generalized Laguerre polynomials. 
Finally, recalling Eq.\ (5.11.4.7) of  \cite{Prudnikov2}, it is
$$
M_{T_x}(s)=\frac{4 \lambda \mu-\left[\lambda+\mu-s-\sqrt{(\lambda+\mu-s)^2-4 \lambda \mu} \right]^2}{4 \mu \left[\sqrt{(\lambda+\mu-s)^2}-4 \lambda \mu\right]}\,
{\rm e}^{\frac{x}{2} \left[\lambda-\mu-s- \sqrt{(\lambda+\mu-s)^2-4 \lambda \mu} \right]},
$$
so that Eq.\ (\ref{fgmTxpos}) immediately follows.
\end{proof}
\par
Hereafter we obtain the MGF of the absorption time at the origin.
\begin{proposition}
Under the same assumptions of Proposition \ref{propdensTxpos},
for $s<(\sqrt{\lambda}-\sqrt{\mu})^2/{2}$, the MGF of $A_x$ is 
\begin{equation}
M_{A_x}(s):=\mathbb E({\rm e}^{s A_x})= \frac{2 \alpha \lambda {\rm e}^{\frac{x}{2} \left[\lambda-\mu- \sqrt{(\lambda+\mu-2s)^2-4 \lambda \mu} \right]}}{2 \lambda (\alpha-1)+(\lambda+\mu-2 s)+\sqrt{(\lambda+\mu-2 s)^2 -4 \lambda \mu}}.
\label{fgmAx}
\end{equation}
\label{proposAxpos}
\end{proposition}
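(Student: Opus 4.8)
The plan is to reduce everything to the moment generating functions of $C_x$ and $C_0$ that are already available, mirroring the zero-state computation in (\ref{fgmA0:bis}). First I would start from the renewal decomposition (\ref{RelAx}). Since $C_x, C_{0,1}, C_{0,2},\ldots$ are independent, the $C_{0,i}$ are distributed as $C_0$, and $M$ is independent of the cycle lengths with the geometric law (\ref{distrM}), conditioning on $M=m$ gives $\mathbb E[{\rm e}^{s A_x}\mid M=m]=M_{C_x}(s)\,[M_{C_0}(s)]^{m-1}$ (the indicator ${\bf 1}_{\{M>1\}}$ simply makes the $m=1$ term reduce to $M_{C_x}(s)$). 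Summing the geometric series,
\begin{equation*}
M_{A_x}(s)=M_{C_x}(s)\sum_{m=1}^{+\infty}[M_{C_0}(s)]^{m-1}\alpha(1-\alpha)^{m-1}
=\frac{\alpha\,M_{C_x}(s)}{1+(\alpha-1)M_{C_0}(s)},
\end{equation*}
which is legitimate wherever $(1-\alpha)M_{C_0}(s)<1$; the stated range $s<(\sqrt{\lambda}-\sqrt{\mu})^2/2$ is exactly the one on which (\ref{fgmC0}) holds, and the identical summation already appeared for $A_0$ in (\ref{fgmA0:bis}).

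Next I would compute $M_{C_x}(s)$ from the distributional identity (\ref{relTCx}), namely $C_x\stackrel{d}{=}x+2T_x$, which gives $M_{C_x}(s)={\rm e}^{sx}M_{T_x}(2s)$. Substituting (\ref{fgmTxpos}) evaluated at argument $2s$ yields
\begin{align*}
M_{C_x}(s)&=\frac{\lambda+\mu-2s-\sqrt{(\lambda+\mu-2s)^2-4\lambda\mu}}{2\mu}\\
&\quad\times{\rm e}^{sx}\,{\rm e}^{\frac{x}{2}\left[\lambda-\mu-2s-\sqrt{(\lambda+\mu-2s)^2-4\lambda\mu}\right]}.
\end{align*}
The only genuine manipulation here is the cancellation of the $sx$ contributions in the combined exponent, namely $sx+\frac{x}{2}\bigl(\lambda-\mu-2s-\sqrt{(\lambda+\mu-2s)^2-4\lambda\mu}\bigr)=\frac{x}{2}\bigl[\lambda-\mu-\sqrt{(\lambda+\mu-2s)^2-4\lambda\mu}\bigr]$. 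Comparing the remaining prefactor with (\ref{fgmC0}) then shows
\begin{equation*}
M_{C_x}(s)=M_{C_0}(s)\,{\rm e}^{\frac{x}{2}\left[\lambda-\mu-\sqrt{(\lambda+\mu-2s)^2-4\lambda\mu}\right]}.
\end{equation*}

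Finally, inserting this into the boxed quotient above, the factor $\alpha\,M_{C_0}(s)/[1+(\alpha-1)M_{C_0}(s)]$ is precisely $M_{A_0}(s)$ as recorded in (\ref{fgmA0:bis})–(\ref{fgmA0}), so $M_{A_x}(s)=M_{A_0}(s)\,{\rm e}^{\frac{x}{2}[\lambda-\mu-\sqrt{(\lambda+\mu-2s)^2-4\lambda\mu}]}$, and substituting the closed form (\ref{fgmA0}) delivers (\ref{fgmAx}) at once. I expect no serious obstacle: the argument is entirely algebraic once the decomposition is in place. The two points that deserve a little care are the exponent cancellation displayed above and the verification that the geometric series converges on the stated $s$-range; for the latter it suffices to note that at $s=0$ one has $(1-\alpha)M_{C_0}(0)=1-\alpha<1$ and that $M_{C_0}$ is continuous there, so the summation is valid in the relevant region and the resulting MGF is finite.
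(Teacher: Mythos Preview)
Your proposal is correct and follows essentially the same route as the paper: both derive the relation $M_{A_x}(s)=\alpha\,M_{C_x}(s)/[1+(\alpha-1)M_{C_0}(s)]$ from (\ref{RelAx}) and (\ref{distrM}), then substitute the closed forms (\ref{fgmC0}) and (\ref{fgmMCxs}). You give more detail than the paper does---in particular the explicit derivation of $M_{C_x}(s)$ from (\ref{relTCx}) and (\ref{fgmTxpos}), and the observation $M_{A_x}(s)=M_{A_0}(s)\,{\rm e}^{\frac{x}{2}[\lambda-\mu-\sqrt{(\lambda+\mu-2s)^2-4\lambda\mu}]}$---but the underlying argument is the same.
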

\begin{proof}
From Eqs.\ (\ref{distrM}) and (\ref{RelAx}), we have the following relation: 
$$
M_{A_x}(s)=\frac{\alpha M_{C_x}(s)}{1+(\alpha-1) M_{C_0}(s)}.
$$
Hence, recalling (\ref{fgmC0}) and noting that  
\begin{eqnarray}
 M_{C_x}(s) & :=& \mathbb E({\rm e}^{s C_x})=\frac{\lambda+\mu- 2s- \sqrt{(\lambda+\mu-2s)^2-4 \lambda \mu}}{2 \mu}
 \nonumber \\
 & \times& {\rm e}^{\frac{x}{2} \left[\lambda-\mu- \sqrt{(\lambda+\mu-2s)^2-4 \lambda \mu} \right]}
\label{fgmMCxs}
\end{eqnarray}
for $s<(\sqrt{\lambda}-\sqrt{\mu})^2/2$, Eq.\ (\ref{fgmAx}) follows after some calculations. 
\end{proof}
\par
Let us now determine the moments of the renewal cycle when the initial state is non-zero.
\begin{proposition}\label{prop:momCxn}
Under the same assumptions of Proposition \ref{propdensTxpos}, for $n\in \mathbb N$ 
the $n$th moment of $C_x$ is given by
\begin{eqnarray}
&& \hspace*{-0.2cm}
\mathbb E(C_x^n)=\frac{\lambda}{\lambda+\mu} {\rm e}^{\frac{x}{2} (\lambda-\mu) } \frac{2^n}{(\lambda+\mu)^n}
 \sum_{h=0}^{n} \left(-\frac{\lambda+\mu}{(\sqrt{\lambda}-\sqrt{\mu})^2} \right)^h 
 \nonumber \\
 && \hspace*{0.4cm}
\times \; {}_{2}F_{1}\left(\frac{1+n-h}{2},\frac{2+n-h}{2};2;\frac{4\lambda \mu}{(\lambda+\mu)^2} \right)
\nonumber
\\
&& \hspace*{0.4cm}
\times \sum_{j=0}^{+\infty} \left[-\frac{(\lambda-\mu) x}{2} \right]^j \frac{1}{j!}
{j/2 \choose h}  \; {}_{2}F_{1}\left(-h,-\frac{j}{2};\frac{j}{2}+1-h; \left(\frac{\sqrt{\lambda}-\sqrt{\mu}}{\sqrt{\lambda}+\sqrt{\mu}}\right)^2 \right),
\nonumber 
\\
\label{momCx}
\end{eqnarray}
with ${}_{2}F_{1}$ given in (\ref{Hyper2F1}), and 
${x \choose h}:=x(x-1)(x-2)\ldots(x-h+1)/h!$ for $x\in \mathbb{R}$ and $h\in \mathbb{N}$. 
\end{proposition}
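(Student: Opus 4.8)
The plan is to extract the moments from the closed-form MGF (\ref{fgmMCxs}) rather than from the density (\ref{densCxexp}). Writing $\rho(s)=\sqrt{(\lambda+\mu-2s)^2-4\lambda\mu}$, the MGF (\ref{fgmMCxs}) together with (\ref{fgmC0}) factorises neatly as
\[
M_{C_x}(s)=M_{C_0}(s)\,{\rm e}^{\frac{x}{2}(\lambda-\mu)}\,{\rm e}^{-\frac{x}{2}\rho(s)}.
\]
Since $M_{C_x}$ is analytic near the origin (it is finite for $s<(\sqrt{\lambda}-\sqrt{\mu})^2/2$), I would expand $M_{C_x}(s)=\sum_{n\ge0}\frac{\mathbb E(C_x^n)}{n!}s^n$ and read $\mathbb E(C_x^n)$ off the coefficient of $s^n$. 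Because of the factorisation, that coefficient is a Cauchy product: the series of $M_{C_0}(s)$ supplies the power $s^{\,n-h}$ and the exponential factor supplies $s^h$, so the outer sum over $h$ in (\ref{momCx}) is precisely this convolution index, while the inner sum over $j$ will come from expanding the exponential in powers of $x$.

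For the $M_{C_0}$ part I would simply recycle (\ref{momC0}): since $[s^{m}]M_{C_0}(s)=\mathbb E(C_0^{m})/m!$, the coefficient of $s^{\,n-h}$ equals $\frac{\lambda\,2^{\,n-h}}{(\lambda+\mu)^{\,n-h+1}}\,{}_2F_1\!\big(\tfrac{n-h+1}{2},\tfrac{n-h+2}{2};2;\tfrac{4\lambda\mu}{(\lambda+\mu)^2}\big)$. Pulling out the common prefactor $\tfrac{\lambda}{\lambda+\mu}\tfrac{2^n}{(\lambda+\mu)^n}$ leaves the first ${}_2F_1$ of (\ref{momCx}) together with a leftover $\big(\tfrac{\lambda+\mu}{2}\big)^h$; this leftover combines with a $\big(-\tfrac{2}{(\sqrt{\lambda}-\sqrt{\mu})^2}\big)^h$ produced below to give exactly the factor $\big(-\tfrac{\lambda+\mu}{(\sqrt{\lambda}-\sqrt{\mu})^2}\big)^h$.

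The step I expect to be the main obstacle is the expansion of ${\rm e}^{-\frac{x}{2}\rho(s)}$. Here I would first factor the radicand by solving $4s^2-4(\lambda+\mu)s+(\lambda-\mu)^2=0$, whose roots are $\tfrac12(\sqrt{\lambda}\pm\sqrt{\mu})^2$, so that $\rho(s)=(\lambda-\mu)\sqrt{Q(s)}$ with
\[
Q(s)=\Big(1-\tfrac{2s}{(\sqrt{\lambda}+\sqrt{\mu})^2}\Big)\Big(1-\tfrac{2s}{(\sqrt{\lambda}-\sqrt{\mu})^2}\Big).
\]
Expanding ${\rm e}^{-\frac{x}{2}(\lambda-\mu)\sqrt{Q(s)}}$ in powers of $x$ produces the outer weight $\big[-\tfrac{(\lambda-\mu)x}{2}\big]^j/j!$ and the power $Q(s)^{j/2}$; expanding the two binomials in $Q(s)^{j/2}$ and collecting $s^h$ gives a convolution $\sum_{k}\binom{j/2}{k}\binom{j/2}{h-k}w^k$ with $w=\big(\tfrac{\sqrt{\lambda}-\sqrt{\mu}}{\sqrt{\lambda}+\sqrt{\mu}}\big)^2$, up to the factor $\big(-\tfrac{2}{(\sqrt{\lambda}-\sqrt{\mu})^2}\big)^h$. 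The crucial identity is that this Vandermonde-type sum collapses to $\binom{j/2}{h}\,{}_2F_1\!\big(-h,-\tfrac{j}{2};\tfrac{j}{2}+1-h;w\big)$, which is exactly the inner summand of (\ref{momCx}); this is the point at which the tabulated hypergeometric identities enter.

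Finally I would reassemble the coefficient of $s^n$ as the Cauchy product just described, multiply by $n!$, and tidy up the scalar constants to obtain (\ref{momCx}). Two checks guide the bookkeeping: setting $x=0$ kills every term except $h=j=0$ and collapses the formula to the zero-state moments (\ref{momC0}), and the case $n=1$ must return $\mathbb E(C_x)=(2+(\lambda+\mu)x)/(\lambda-\mu)$. The genuinely non-routine ingredients are the quadratic factorisation of the radicand and the collapse of the double binomial sum into a single Gauss hypergeometric function; matching the many powers of $2$, $\lambda\pm\mu$ and $\sqrt{\lambda}\pm\sqrt{\mu}$ is tedious but mechanical.
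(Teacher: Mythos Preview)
Your proposal is correct and follows essentially the same route as the paper: factorise $M_{C_x}(s)=M_{C_0}(s)\cdot\exp\{\tfrac{x}{2}[\lambda-\mu-\rho(s)]\}$ via (\ref{fgmC0}) and (\ref{fgmMCxs}), factor the radicand as $\rho(s)^2=\big(2s-(\sqrt\lambda-\sqrt\mu)^2\big)\big(2s-(\sqrt\lambda+\sqrt\mu)^2\big)$, expand each binomial, collapse the resulting convolution $\sum_k\binom{j/2}{k}\binom{j/2}{h-k}w^k$ into $\binom{j/2}{h}\,{}_2F_1(-h,-\tfrac{j}{2};\tfrac{j}{2}+1-h;w)$, and read off moments by Cauchy-convolving with (\ref{momC0}). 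The only organisational difference is that you pull the constant factor $e^{\frac{x}{2}(\lambda-\mu)}$ out before expanding in $x$, whereas the paper first expands $[\lambda-\mu-\rho(s)]^n$ by the binomial theorem and recovers that prefactor after interchanging the $n$ and $j$ summations; the two computations are otherwise identical.
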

\begin{proof}
Comparing the MGFs (\ref{fgmC0}) and (\ref{fgmMCxs}), for $s<(\sqrt{\lambda}-\sqrt{\mu})^2/{2}$ 
we have
\begin{equation}
 M_{C_x}(s)=M_{C_0}(s)\cdot {\rm e}^{\frac{x}{2} \left[\lambda-\mu- \sqrt{(\lambda+\mu-2s)^2-4 \lambda \mu} \right]}.
 \label{eq:relFG}
\end{equation}
We note that
\begin{eqnarray}
&& \hspace*{-0.5cm}
{\rm e}^{\frac{x}{2} \left[\lambda-\mu- \sqrt{(\lambda+\mu-2s)^2-4 \lambda \mu} \right]}=\sum_{n=0}^{+\infty} \frac{x^n}{2^n n!}\left[\lambda-\mu- \sqrt{(\lambda+\mu-2s)^2-4 \lambda \mu} \right]^n
\nonumber
\\
&& \hspace*{2.5cm}
=\sum_{n=0}^{+\infty} \frac{x^n}{2^n n!} \sum_{j=0}^n {n \choose j} (\lambda-\mu)^{n-j}(-1)^j [(\lambda+\mu-2s)^2-4 \lambda \mu]^{j/2},
\nonumber
\end{eqnarray}
where
\begin{eqnarray*}
&&  
 [(\lambda+\mu-2s)^2-4 \lambda \mu]^{j/2}
 \\
&& =(\lambda-\mu)^j \sum_{k=0}^{+\infty} {j/2 \choose k} \left(-\frac{2 s}{(\sqrt{\lambda}-\sqrt{\mu})^2} \right)^k
 \sum_{l=0}^{+\infty} {j/2 \choose l} \left(-\frac{2 s}{(\sqrt{\lambda}+\sqrt{\mu})^2} \right)^l
\\
&& 
=(\lambda-\mu)^j \sum_{r=0}^{+\infty} s^r  \sum_{h=0}^{r} {j/2 \choose h}  {j/2 \choose r-h}  \left(-\frac{2}{(\sqrt{\lambda}-\sqrt{\mu})^2} \right)^{r-h}
\left(-\frac{2}{(\sqrt{\lambda}+\sqrt{\mu})^2} \right)^h,
\end{eqnarray*}
so that 
\begin{eqnarray}
&& {\rm e}^{\frac{x}{2} \left[\lambda-\mu- \sqrt{(\lambda+\mu-2s)^2-4 \lambda \mu} \right]}
\nonumber \\
&& ={\rm e}^{\frac{x}{2} (\lambda-\mu) }
\sum_{r=0}^{+\infty} s^r \left(-\frac{2}{(\sqrt{\lambda}-\sqrt{\mu})^2} \right)^{r}
\sum_{j=0}^{+\infty} \left[-\frac{(\lambda-\mu) x}{2} \right]^j \frac{1}{j!}
\nonumber
\\
&& 
\times \sum_{h=0}^r {j/2 \choose h}  {j/2 \choose r-h} \left(\frac{\sqrt{\lambda}-\sqrt{\mu}}{\sqrt{\lambda}+\sqrt{\mu}}   \right)^{2 h}
={\rm e}^{\frac{x}{2} (\lambda-\mu) }
\sum_{r=0}^{+\infty} s^r \left(-\frac{2}{(\sqrt{\lambda}-\sqrt{\mu})^2} \right)^{r}
\nonumber
\\
&& 
\times
\sum_{j=0}^{+\infty} \left[-\frac{(\lambda-\mu) x}{2} \right]^j \frac{1}{j!}
{j/2 \choose r} \;{}_{2}F_{1}\left(-r,-j/2;j/2+1-r; \left(\frac{\sqrt{\lambda}-\sqrt{\mu}}{\sqrt{\lambda}+\sqrt{\mu}}\right)^2 \right).
\nonumber \\
\label{serie2}
\end{eqnarray}
Hence, the moments of $C_x$ can be obtained from (\ref{eq:relFG}) and 
taking into account Eqs.\ (\ref{momC0}) and (\ref{serie2}), after some calculations.
\end{proof}
\par
We can now provide the moments of $A_x$. 
\begin{proposition}\label{prop:momAxn}
Under the same assumptions of Proposition \ref{propdensTxpos}, the $n$th moment of $A_x$, 
for $n\in \mathbb N$, is given by
\begin{eqnarray}
&& \hspace*{-0.2cm}
\mathbb E(A_x^n)=2 \alpha \lambda {\rm e}^{\frac{x}{2} (\lambda-\mu) }
 \sum_{h=0}^{n} \left(-\frac{2}{(\sqrt{\lambda}-\sqrt{\mu})^2} \right)^h \frac{(8 \lambda (\alpha-1))^{n-h}}{(4 \lambda \alpha (\mu+ \lambda (\alpha-1)))^{n-h+1}}
\nonumber
\\
&& \hspace*{0.4cm}
\times \left[ 2 \mu+2 \lambda (\alpha-1)+\frac{2 \lambda \mu}{\lambda+\mu} \sum_{m=1}^{n-h} \left(\frac{\alpha \mu+\alpha \lambda (\alpha-1) }{(\alpha-1)(\lambda+\mu)} \right)^m\; \right.
\nonumber
\\
&& \hspace*{0.4cm}
\times \; \left.
 {}_{2}F_{1}\left(\frac{m+1}{2},\frac{m+2}{2};2; \frac{4\lambda \mu}{(\lambda+\mu)^2} \right)
\right]
\nonumber
\\
&& \hspace*{0.4cm}
\times \sum_{j=0}^{+\infty} \left[-\frac{(\lambda-\mu) x}{2} \right]^j \frac{1}{j!}
{j/2 \choose h}\; {}_{2}F_{1}\left(-h,-\frac{j}{2};\frac{j}{2}+1-h; \left(\frac{\sqrt{\lambda}-\sqrt{\mu}}{\sqrt{\lambda}+\sqrt{\mu}}\right)^2 \right).
\nonumber \\
&& 
\label{momAx}
\end{eqnarray}
\end{proposition}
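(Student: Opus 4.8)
The plan is to reduce the computation to a Cauchy product, exactly as in the proof of Proposition~\ref{prop:momCxn}. First I would establish the multiplicative identity $M_{A_x}(s)=M_{A_0}(s)\,E(s)$, valid for $s<(\sqrt{\lambda}-\sqrt{\mu})^2/2$, where $E(s):={\rm e}^{\frac{x}{2}[\lambda-\mu-\sqrt{(\lambda+\mu-2s)^2-4\lambda\mu}]}$ is the exponential factor appearing in $(\ref{fgmAx})$. This follows upon inserting the factorization $(\ref{eq:relFG})$, namely $M_{C_x}(s)=M_{C_0}(s)\,E(s)$, into the relation $M_{A_x}(s)=\alpha M_{C_x}(s)/[1+(\alpha-1)M_{C_0}(s)]$ derived in the proof of Proposition~\ref{proposAxpos}, and comparing with the identity $M_{A_0}(s)=\alpha M_{C_0}(s)/[1+(\alpha-1)M_{C_0}(s)]$ of $(\ref{fgmA0:bis})$.

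Since $\mathbb{E}(A_x^n)=n!\,[s^n]M_{A_x}(s)$, I would then read off the two power series on the right-hand side. The expansion of $E(s)$ is already available from $(\ref{serie2})$: its coefficient of $s^h$ equals ${\rm e}^{\frac{x}{2}(\lambda-\mu)}\bigl(-2/(\sqrt{\lambda}-\sqrt{\mu})^2\bigr)^h$ multiplied by the inner sum over $j$ that appears in $(\ref{momAx})$. The expansion of $M_{A_0}(s)$ is encoded in $(\ref{momA0})$, whose coefficient of $s^{n-h}$ equals $\mathbb{E}(A_0^{n-h})/(n-h)!$; factoring $[8\lambda(\alpha-1)]^{n-h}$ out of the brace in $(\ref{momA0})$ and simplifying with the identity $\frac{4\lambda\alpha(\mu+\lambda(\alpha-1))}{8\lambda(\alpha-1)}\cdot\frac{2}{\lambda+\mu}=\frac{\alpha\mu+\alpha\lambda(\alpha-1)}{(\alpha-1)(\lambda+\mu)}$ recasts this coefficient into exactly the bracketed factor displayed in $(\ref{momAx})$.

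Finally I would convolve the two series, using $[s^n]M_{A_x}(s)=\sum_{h=0}^{n}[s^h]E(s)\,[s^{n-h}]M_{A_0}(s)$, and multiply by $n!$; substituting the two coefficients above reproduces $(\ref{momAx})$ term by term, with the index $h$ recording the power of $s$ contributed by $E(s)$ and $n-h$ the power contributed by $M_{A_0}(s)$. The only genuine bookkeeping obstacle is the algebraic rearrangement that turns the brace of $(\ref{momA0})$ into the geometric-type summand $\bigl(\frac{\alpha\mu+\alpha\lambda(\alpha-1)}{(\alpha-1)(\lambda+\mu)}\bigr)^m$; once this step is carried out the summation ranges match the outer structure of $(\ref{momAx})$ automatically, and the remaining manipulations are routine.
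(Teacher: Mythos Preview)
Your approach is correct and essentially identical to the paper's: the authors also derive the factorization $M_{A_x}(s)=M_{A_0}(s)\cdot E(s)$ (obtained there by direct comparison of $(\ref{fgmA0})$ and $(\ref{fgmAx})$ rather than via $(\ref{eq:relFG})$ and $(\ref{fgmA0:bis})$, but the result is the same) and then extract the moments from $(\ref{momA0})$ and $(\ref{serie2})$ by the same Cauchy-product argument used in Proposition~\ref{prop:momCxn}. Your explicit algebraic identity $\frac{4\lambda\alpha(\mu+\lambda(\alpha-1))}{8\lambda(\alpha-1)}\cdot\frac{2}{\lambda+\mu}=\frac{\alpha\mu+\alpha\lambda(\alpha-1)}{(\alpha-1)(\lambda+\mu)}$ for recasting the brace of $(\ref{momA0})$ is in fact more detailed than what the paper spells out.
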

\begin{proof}
Due to Eqs.\  (\ref{fgmA0}) and (\ref{fgmAx}), the following relation holds:
$$
 M_{A_x}(s)=M_{A_0}(s)\cdot {\rm e}^{\frac{x}{2} \left[\lambda-\mu- \sqrt{(\lambda+\mu-2s)^2-4 \lambda \mu} \right]}.
$$
The moments (\ref{momAx}) then follow from Eqs.\ (\ref{momA0}) and (\ref{serie2}), similarly as in the proof of 
Proposition \ref{prop:momCxn}.
\end{proof}
\par
From Propositions \ref{prop:momCxn} and \ref{prop:momAxn} the following results immediately follow. 
\begin{proposition}
For $0<\mu<\lambda$ and $\alpha\in (0,1)$, the means of $C_x$ and $A_x$ are 
$$
\mathbb E(C_x)=\frac{2+(\lambda+\mu)x}{\lambda-\mu},
\qquad 
\mathbb E(A_x)=\frac{2+\alpha(\lambda+\mu)x}{\alpha(\lambda-\mu)},
$$
whereas their variances are given by
$$
Var(C_x)=\frac{4 \mu}{(\lambda-\mu)^3}-\frac{2(\lambda^2-\mu^2-2 \lambda \mu)x}{(\lambda-\mu)^3}-\frac{(\lambda+\mu)^2 x^2}{2 (\lambda-\mu)^2},
$$
$$
Var(A_x)=\frac{4 \mu}{\alpha (\lambda-\mu)^3}-\frac{2(\lambda^2-\mu^2-2 \alpha \lambda \mu)x}{\alpha (\lambda-\mu)^3}-\frac{(\lambda+\mu)^2 x^2}{2 (\lambda-\mu)^2}.
$$
\end{proposition}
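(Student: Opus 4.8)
The plan is to read off the four quantities directly from the moment generating functions already established, rather than from the more cumbersome series of Propositions \ref{prop:momCxn} and \ref{prop:momAxn}. Recall that the mean of either variable is the first derivative of its MGF at $s=0$, while the variance is the second derivative of the logarithm of the MGF at $s=0$. Equivalently, one may set $n=1$ and $n=2$ in (\ref{momCx}) and (\ref{momAx}) and resum the resulting $j$-series into a polynomial in $x$ (which is exactly the sense in which the results ``immediately follow''), but differentiating the closed-form MGFs (\ref{fgmMCxs}) and (\ref{fgmAx}) is cleaner and I would take that route.

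First I would introduce the shorthand $R(s)=\sqrt{(\lambda+\mu-2s)^2-4\lambda\mu}$, which is the only nonelementary ingredient of both MGFs. A short computation records the three values that will be needed: $R(0)=\lambda-\mu$ (using $\mu<\lambda$), $R'(0)=-2(\lambda+\mu)/(\lambda-\mu)$, and $R''(0)=-16\lambda\mu/(\lambda-\mu)^3$. Everything else in the MGFs is rational in $s$ and in the exponent, so no further analytic input is required.

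The key structural observation is that both MGFs factor through the \emph{same} exponential. By (\ref{eq:relFG}) one has $M_{C_x}(s)=M_{C_0}(s)\,e^{\frac{x}{2}[\lambda-\mu-R(s)]}$, and by the identity used in the proof of Proposition \ref{proposAxpos} one has $M_{A_x}(s)=M_{A_0}(s)\,e^{\frac{x}{2}[\lambda-\mu-R(s)]}$. Taking logarithms turns each product into a sum, so the first two cumulants of $C_x$ (respectively $A_x$) split as the corresponding cumulant of $C_0$ (respectively $A_0$), already computed in Section \ref{section:3}, plus the contribution of the common exponential factor. Since $\log e^{\frac{x}{2}[\lambda-\mu-R(s)]}=\frac{x}{2}(\lambda-\mu)-\frac{x}{2}R(s)$, that contribution is exactly $-\frac{x}{2}R'(0)$ to the first cumulant and $-\frac{x}{2}R''(0)$ to the second. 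Substituting the three values of $R$ above, together with $\mathbb{E}(C_0)$, $Var(C_0)$, $\mathbb{E}(A_0)$ and $Var(A_0)$ from Section \ref{section:3}, then delivers the four claimed means and variances.

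I expect the main obstacle to be purely computational bookkeeping: reducing the derivatives of $R(s)$ to rational functions of $\lambda$ and $\mu$, and then collecting the terms according to their degree in $x$ so that the resulting expressions simplify to the stated closed forms. A useful sanity check to perform before committing to the final formulas is the behaviour at the endpoint $x\to 0^+$, where $C_x$ and $A_x$ must reduce in distribution to $C_0$ and $A_0$; the $x=0$ specializations of the four formulas should therefore reproduce $\mathbb{E}(C_0)$, $Var(C_0)$, $\mathbb{E}(A_0)$ and $Var(A_0)$ exactly, and verifying this consistency is the quickest way to catch an algebraic slip in the collection of the $x$-dependent terms.
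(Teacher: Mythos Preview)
Your approach is valid and takes a genuinely different route from the paper's. The paper simply asserts that the proposition follows ``immediately'' from the general $n$th-moment series of Propositions~\ref{prop:momCxn} and~\ref{prop:momAxn}, i.e.\ by specializing those hypergeometric expansions to $n=1,2$ and collapsing the $j$-sums. Your route exploits instead the factorizations $M_{C_x}(s)=M_{C_0}(s)\,e^{\frac{x}{2}[\lambda-\mu-R(s)]}$ and $M_{A_x}(s)=M_{A_0}(s)\,e^{\frac{x}{2}[\lambda-\mu-R(s)]}$ and passes to cumulants, so that the $x$-dependence of mean and variance is transparently linear (coming from $-\tfrac{x}{2}R'(0)$ and $-\tfrac{x}{2}R''(0)$) and the $x=0$ pieces are just the Section~\ref{section:3} values. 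This is cleaner and more robust than unwinding the double series, and it makes the structure of the answer obvious.

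One caution, which your own sanity check will surface: the two mean formulas come out exactly as stated, but the two variance formulas printed in the proposition do \emph{not} survive the check. At $x=0$ they specialize to $4\mu/(\lambda-\mu)^3$ and $4\mu/(\alpha(\lambda-\mu)^3)$, which disagree with $Var(C_0)=4(\lambda+\mu)/(\lambda-\mu)^3$ and $Var(A_0)=4[\lambda+\mu(2\alpha-1)]/(\alpha^2(\lambda-\mu)^3)$ from Section~\ref{section:3}; moreover the term $-\tfrac{(\lambda+\mu)^2x^2}{2(\lambda-\mu)^2}$ would drive both variances negative for large $x$. Your cumulant computation yields instead
\[
Var(C_x)=\frac{4(\lambda+\mu)+8\lambda\mu\,x}{(\lambda-\mu)^3},
\qquad
Var(A_x)=\frac{4[\lambda+\mu(2\alpha-1)]}{\alpha^2(\lambda-\mu)^3}+\frac{8\lambda\mu\,x}{(\lambda-\mu)^3},
\]
which are positive, linear in $x$, and consistent at $x=0$. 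So when the numbers disagree, trust your method: the discrepancy is in the printed variance formulas, not in your argument.
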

\par
It is easy to see that $\mathbb E(A_x)$ is decreasing in $\alpha$, and clearly tends to $\mathbb E(C_x)$ as 
$\alpha\to 1^-$. Indeed, $A_x$ identifies with $C_x$ when $\alpha=1$.

\section{\bf  Conditional distribution of the process within a renewal cycle $C_0$}\label{section:5}
In this section we derive the conditional distribution of $X(t)$ within a renewal cycle in the case 
of zero initial state. Specifically, let us consider renewal cycles that start with $X(0)=0$ and ends at $C_0$. 
We recall that $Y(t)$ is the compound Poisson process defined in (\ref{Ydef}) and $T_0$ is the 
stopping time introduced in (\ref{Txdef}) for $x=0$. Given $T_0$, we consider 
any sample path of $Y(t)$ which crosses the boundary $\{ B(t)=t,\; t>0\}$ at $T_0$. 
For any given $t\in (0,C_0)$, let $W(t)$ be the time coordinate at which the sample path of $Y(t)$ 
crosses the line $\{L_t(w)=t-w,\;  w\in (0,t)\}$. The value of $X(t)$ within a renewal cycle is then $X(t)=2 W(t)-t$, 
in the case of zero initial state. 
Notice that $W(t)$ is the total time in $(0,t]$ at which the telegraph process is moving upwards.  
As example, Figures 4 and 5 provide sample paths of such processes. 
We observe that, for every $t\in (0,C_0)$ and given $T_0=C_0/2$, it results $t/2<W(t)\leq T_0$. 
\begin{figure}[t]
\begin{center}
\begin{picture}(341,236) 
\put(20,30){\vector(1,0){300}} 
\put(40,5){\vector(0,1){200}} 
\put(40,30.5){\line(1,0){60}} 
\put(100,30.5){\line(0,1){24.5}} 
\put(100,55){\line(1,0){50}} 
\put(150,55){\line(0,1){85}} 
\put(40,30){\line(1,1){10}} 
\put(55,45){\line(1,1){10}} 
\put(70,60){\line(1,1){10}} 
\put(85,75){\line(1,1){10}} 
\put(100,90){\line(1,-1){10}} 
\put(115,75){\line(1,-1){10}} 
\put(130,70){\line(1,1){10}} 
\put(145,85){\line(1,1){10}} 
\put(160,100){\line(1,1){10}} 
\put(175,115){\line(1,-1){10}} 
\put(190,100){\line(1,-1){10}} 
\put(205,85){\line(1,-1){10}} 
\put(220,70){\line(1,-1){10}} 
\put(235,55){\line(1,-1){10}} 
\put(250,40){\line(1,-1){10}} 
\put(105,95){\circle*{1}} 
\put(110,100){\circle*{1}} 
\put(115,105){\circle*{1}} 
\put(120,110){\circle*{1}} 
\put(125,115){\circle*{1}} 
\put(130,120){\circle*{1}} 
\put(135,125){\circle*{1}} 
\put(140,130){\circle*{1}} 
\put(145,135){\circle*{1}} 
\put(150,140){\circle*{1}} 
\put(155,145){\circle*{1}} 
\put(160,150){\circle*{1}} 
\put(165,155){\circle*{1}} 
\put(170,160){\circle*{1}} 
\put(175,165){\circle*{1}} 
\put(100,27){\line(0,1){3}} 
\put(125,27){\line(0,1){3}} 
\put(150,27){\line(0,1){3}} 
\put(175,27){\line(0,1){3}} 
\put(260,27){\line(0,1){3}} 
\put(125,30){\circle*{1}} 
\put(120,35){\circle*{1}} 
\put(115,40){\circle*{1}} 
\put(110,45){\circle*{1}} 
\put(105,50){\circle*{1}} 
\put(100,55){\circle*{1}} 
\put(95,60){\circle*{1}} 
\put(90,65){\circle*{1}} 
\put(85,70){\circle*{1}} 
\put(80,75){\circle*{1}} 
\put(75,80){\circle*{1}} 
\put(70,85){\circle*{1}} 
\put(65,90){\circle*{1}} 
\put(60,95){\circle*{1}} 
\put(55,100){\circle*{1}} 
\put(50,105){\circle*{1}} 
\put(45,110){\circle*{1}} 
\put(40,115){\circle*{1}} 
\put(175,30){\circle*{1}} 
\put(170,35){\circle*{1}} 
\put(165,40){\circle*{1}} 
\put(160,45){\circle*{1}} 
\put(155,50){\circle*{1}} 
\put(150,55){\circle*{1}} 
\put(145,60){\circle*{1}} 
\put(140,65){\circle*{1}} 
\put(135,70){\circle*{1}} 
\put(130,75){\circle*{1}} 
\put(125,80){\circle*{1}} 
\put(120,85){\circle*{1}} 
\put(115,90){\circle*{1}} 
\put(110,95){\circle*{1}} 
\put(105,100){\circle*{1}} 
\put(100,105){\circle*{1}} 
\put(95,110){\circle*{1}} 
\put(90,115){\circle*{1}} 
\put(85,120){\circle*{1}} 
\put(80,125){\circle*{1}} 
\put(75,130){\circle*{1}} 
\put(70,135){\circle*{1}} 
\put(65,140){\circle*{1}} 
\put(60,145){\circle*{1}} 
\put(55,150){\circle*{1}} 
\put(50,155){\circle*{1}} 
\put(45,160){\circle*{1}} 
\put(40,165){\circle*{1}} 
\put(80,6){\makebox(40,15)[t]{{$t_1$}}}
\put(105,6){\makebox(40,15)[t]{{$t_2$}}}
\put(155,6){\makebox(40,15)[t]{{$t_3$}}}
\put(130,6){\makebox(40,15)[t]{{$T_0$}}}
\put(235,6){\makebox(50,15)[t]{{$C_0$}}} 
\put(12,6){\makebox(40,15)[t]{0}} 
\put(235,200){\line(1,0){10}} 
\put(250,200){\line(1,0){10}} 
\put(265,200){\line(1,0){10}} 
\put(280,200){\line(1,0){10}} 
\put(235,180){\line(1,0){55}} 
\put(290,190){\makebox(40,15)[t]{{$X(t)$}}}
\put(290,170){\makebox(40,15)[t]{{$Y(t)$}}}
\put(300,5){\makebox(40,15)[t]{{$t$}}}
\put(170,165){\makebox(40,15)[t]{{$B(t)=t$}}}
\put(-20,155){\makebox(40,15)[t]{{$L_{t_3}(w)=t_3-w$}}}
\put(-20,105){\makebox(40,15)[t]{{$L_{t_2}(w)=t_2-w$}}}
\end{picture} 
\end{center}
\vspace{-0.5cm}
\caption{Sample paths of the processes $X(t)$ and $Y(t)$, with $x=0$.}
\end{figure}
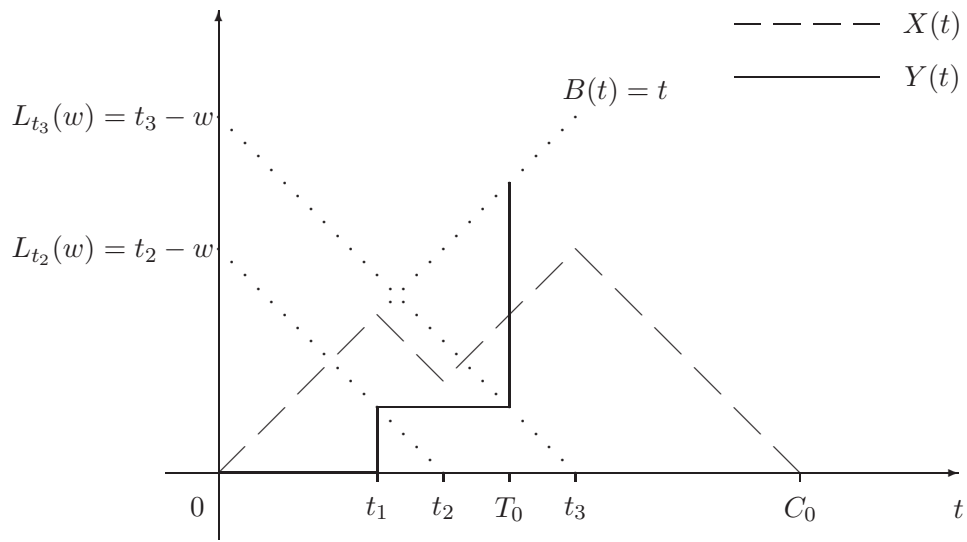
\begin{figure}[t]
\begin{center}
\begin{picture}(341,236) 
\put(20,30){\vector(1,0){300}} 
\put(40,5){\vector(0,1){200}} 
\put(40,30){\line(1,1){60}} 
\put(100,90){\line(1,0){25}} 
\put(125,90){\line(1,1){50}} 
\put(175,140){\line(1,0){85}} 
%
\put(150,140){\circle*{1}} 
\put(155,140){\circle*{1}} 
\put(160,140){\circle*{1}} 
\put(165,140){\circle*{1}} 
\put(170,140){\circle*{1}} 
\put(145,140){\circle*{1}} 
\put(140,140){\circle*{1}} 
\put(135,140){\circle*{1}} 
\put(130,140){\circle*{1}} 
\put(125,140){\circle*{1}} 
\put(120,140){\circle*{1}} 
\put(115,140){\circle*{1}} 
\put(110,140){\circle*{1}} 
\put(105,140){\circle*{1}} 
\put(100,140){\circle*{1}} 
\put(95,140){\circle*{1}} 
\put(90,140){\circle*{1}} 
\put(85,140){\circle*{1}} 
\put(80,140){\circle*{1}} 
\put(75,140){\circle*{1}} 
\put(70,140){\circle*{1}} 
\put(65,140){\circle*{1}} 
\put(60,140){\circle*{1}} 
\put(55,140){\circle*{1}} 
\put(50,140){\circle*{1}} 
\put(45,140){\circle*{1}} 
\put(40,140){\line(-1,0){3}} 
\put(100,27){\line(0,1){3}} 
\put(125,27){\line(0,1){3}} 
\put(175,27){\line(0,1){3}} 
\put(260,27){\line(0,1){3}} 
\put(5,130){\makebox(40,15)[t]{{$T_0$}}}
\put(80,6){\makebox(40,15)[t]{{$t_1$}}}
\put(105,6){\makebox(40,15)[t]{{$t_2$}}}
\put(155,6){\makebox(40,15)[t]{{$t_3$}}}
\put(235,6){\makebox(50,15)[t]{{$C_0$}}} 
\put(12,6){\makebox(40,15)[t]{0}} 
\put(300,5){\makebox(40,15)[t]{{$t$}}}
\put(100,35){\circle*{1}} 
\put(100,40){\circle*{1}} 
\put(100,45){\circle*{1}} 
\put(100,50){\circle*{1}} 
\put(100,55){\circle*{1}} 
\put(100,60){\circle*{1}} 
\put(100,65){\circle*{1}} 
\put(100,70){\circle*{1}} 
\put(100,75){\circle*{1}} 
\put(100,80){\circle*{1}} 
\put(100,85){\circle*{1}} 
\put(100,90){\circle*{1}} 
\put(125,35){\circle*{1}} 
\put(125,40){\circle*{1}} 
\put(125,45){\circle*{1}} 
\put(125,50){\circle*{1}} 
\put(125,55){\circle*{1}} 
\put(125,60){\circle*{1}} 
\put(125,65){\circle*{1}} 
\put(125,70){\circle*{1}} 
\put(125,75){\circle*{1}} 
\put(125,80){\circle*{1}} 
\put(125,85){\circle*{1}} 
\put(125,90){\circle*{1}} 
\put(175,35){\circle*{1}} 
\put(175,40){\circle*{1}} 
\put(175,45){\circle*{1}} 
\put(175,50){\circle*{1}} 
\put(175,55){\circle*{1}} 
\put(175,60){\circle*{1}} 
\put(175,65){\circle*{1}} 
\put(175,70){\circle*{1}} 
\put(175,75){\circle*{1}} 
\put(175,80){\circle*{1}} 
\put(175,85){\circle*{1}} 
\put(175,95){\circle*{1}} 
\put(175,90){\circle*{1}} 
\put(175,100){\circle*{1}} 
\put(175,105){\circle*{1}} 
\put(175,110){\circle*{1}} 
\put(175,115){\circle*{1}} 
\put(175,120){\circle*{1}} 
\put(175,125){\circle*{1}} 
\put(175,130){\circle*{1}} 
\put(175,135){\circle*{1}} 
\put(175,140){\circle*{1}} 
\put(260,35){\circle*{1}} 
\put(260,40){\circle*{1}} 
\put(260,45){\circle*{1}} 
\put(260,50){\circle*{1}} 
\put(260,55){\circle*{1}} 
\put(260,60){\circle*{1}} 
\put(260,65){\circle*{1}} 
\put(260,70){\circle*{1}} 
\put(260,75){\circle*{1}} 
\put(260,80){\circle*{1}} 
\put(260,85){\circle*{1}} 
\put(260,95){\circle*{1}} 
\put(260,90){\circle*{1}} 
\put(260,100){\circle*{1}} 
\put(260,105){\circle*{1}} 
\put(260,110){\circle*{1}} 
\put(260,115){\circle*{1}} 
\put(260,120){\circle*{1}} 
\put(260,125){\circle*{1}} 
\put(260,130){\circle*{1}} 
\put(260,135){\circle*{1}} 
\put(260,140){\circle*{1}} 
\end{picture} 
\end{center}
\vspace{-0.5cm}
\caption{Sample path of $W(t)$ corresponding to the case of Figure 4.}
\end{figure}
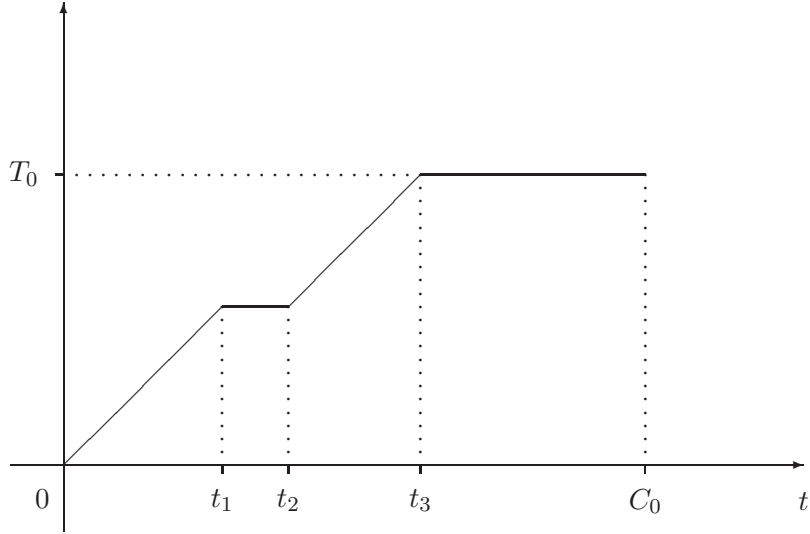
\par
Let us now determine the subdistribution function of $Y(t)$ and $T_0$, in the case of zero initial state. 
For  any $t\in (0, \infty)$ it is defined as  
\begin{equation}
 F_{Y(t),T_0}(y,\tau){\rm d}\tau:=\mathbb P[Y(t)\leq y, T_0\in {\rm d}\tau], 
 \qquad  y\in (0,t), \;\;  \tau\in (t, \infty).
 \label{eq:defcdfYT0}
\end{equation}
\begin{proposition}
Under assumptions (\ref{defF}) and (\ref{defG}), for $0<\mu <\lambda$, the subdistribution 
function defined in (\ref{eq:defcdfYT0}) is given by 
\begin{eqnarray} 
&& \hspace*{-1.2cm}
F_{Y(t),T_0}(y,\tau)=\lambda {\rm e}^{-(\lambda+\mu) \tau}\left\{ I_0\left(2\sqrt{\lambda \mu \tau (t-\tau)}\right)\right.
\nonumber \\
&&  \hspace*{-0.9cm}
+\frac{1}{2} \sum_{r=0}^{+\infty} \frac{[\lambda \mu t (\tau-t)]^r}{r! (r+1)!}  
 \sum_{j=0}^r {r\choose j} (j+r+1) \left(\frac{\tau-t}{t}\right)^j
\nonumber
\\
&&\hspace*{-0.9cm}
\times 
\left[-1+{}_{1}F_{2}\left(-\frac{1}{2};\frac{j+r+1}{2},1+\frac{j+r}{2};\lambda \mu (\tau-t)^2\right)\right] 
\nonumber
\\
&& \hspace*{-0.9cm}
+\lambda\,\mu\, y\, \sum_{j=0}^{+\infty} \frac{[\lambda \mu y (\tau-t)]^j}{j!} \;
{}_{0}F_{1}\left(; j+1; \lambda \mu (t-\tau) (y-\tau)\right) 
\nonumber\\
&& \hspace*{-0.9cm}
\times \left[\frac{t}{(j+1)!}\; {}_{1}F_{2}\left(1;j+2,2;\lambda \mu t y\right) -\frac{y}{(j+2)!}\; {}_{1}F_{2}\left(2;j+3,2;\lambda \mu t y\right)\right]
\nonumber
\\
&& \hspace*{-0.9cm}
+\frac{\lambda\,\mu}{2}\sum_{r=0}^{+\infty} \frac{[\lambda \mu (\tau-t)]^r}{r!(r+1)!} 
 \sum_{s=0}^r {r\choose r-s} (2r+1-s) (\tau-t)^{r-s}
\nonumber
\\
&& \hspace*{-0.9cm}
\times \left[-1+{}_{1}F_{2}\left(-\frac{1}{2};r-\frac{s-1}{2},r+1-\frac{s}{2};\lambda \mu (\tau-t)^2\right)\right]
\nonumber
\\
&& \hspace*{-0.9cm}
\left.
\times \sum_{k=0}^{s+1} {s+1 \choose k} (t-y)^{s+1-k} \frac{y^{k+1}}{k+1} \; 
{}_{1}F_{2}\left(1;k+2,2;\lambda \mu t y \right)\right\},
\label{jointdistr4}
\end{eqnarray}
where 
\begin{equation}
{}_{0}F_{1}\left(; b; z\right)=\sum_{n=0}^{+\infty} \frac{z^n}{(b)_n n!} 
 \label{Hyper0F1}
\end{equation}
and  ${}_{1}F_{2}(a;b,c;z)$ is defined in Eq.\ (\ref{Hyper1F2}).
\label{prop_jointdistr}
\end{proposition}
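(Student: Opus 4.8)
The plan is to condition on the value of $Y(t)$ at the fixed time $t$ and to reduce the residual first-passage problem to the one already solved in Proposition \ref{propdensTxpos}, exploiting the compound-Poisson (independent-increment) structure of $Y$ together with the memorylessness of the exponential upward periods (\ref{defF}). Since $0<y<t<\tau$, the event in (\ref{eq:defcdfYT0}) entails $Y(t)\le y<t$ and $T_0>t$. The law of $Y(t)$ restricted to $\{T_0>t\}$ splits into an atom at the origin, of mass $\mathbb P[Y(t)=0]=e^{-\lambda t}$ (no Poisson arrival before $t$, whence trivially $T_0>t$), and an absolutely continuous part on $(0,t)$ with subdensity $g_0(w,t)$ given in (\ref{g0def}). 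Decomposing accordingly, I obtain the master representation
\begin{equation}
F_{Y(t),T_0}(y,\tau)=e^{-\lambda t}\,\psi_t(\tau-t)+\int_{0}^{y} g_0(w,t)\,\psi_{t-w}(\tau-t)\,{\rm d}w,
\label{eq:planmaster}
\end{equation}
where $\psi_x$ is the density of $T_x$ from (\ref{psixexp_pos}).

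To justify (\ref{eq:planmaster}) I would argue as follows. Given $\{Y(t)=w,\,T_0>t\}$, write $\widetilde Y(u):=Y(t+u)-w$; the crossing condition $Y(t+u)\ge t+u$ for the continuation reads $\widetilde Y(u)\ge(t-w)+u$, which by (\ref{Txdef}) is exactly the first-passage event defining $T_{t-w}$. The essential point is that $\widetilde Y$ is a fresh copy of $Y$, independent of $\mathcal F_t$: at time $t$ the trajectory may be in the interior of an upward period, but by the lack of memory of the exponential law the residual upward time is again exponential$(\lambda)$, so $\widetilde Y$ is compound Poisson with the original law. The strong Markov property then yields the conditional crossing density $\psi_{t-w}(\tau-t)$; for the atom $w=0$ the gap equals $t$ and the conditional density is $\psi_t(\tau-t)$, while for $w\in(0,y)$ it is $\psi_{t-w}(\tau-t)$. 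Note that $t-w\in(0,t)$ throughout, so only the non-zero-gap formula (\ref{psixexp_pos}) is needed.

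It remains to make (\ref{eq:planmaster}) explicit. The atom term is immediate: substituting $x=t$ and time $\tau-t$ into (\ref{psixexp_pos}) and multiplying by $e^{-\lambda t}$, the exponential factors telescope ($-\lambda t-(\lambda+\mu)(\tau-t)-\mu t=-(\lambda+\mu)\tau$) to the common prefactor $\lambda e^{-(\lambda+\mu)\tau}$ of (\ref{jointdistr4}), and the bracket reproduces the first two ($y$-independent) terms of (\ref{jointdistr4}). The integral then produces the last two terms, which carry the factors $y$ and $y^{k+1}$ and hence vanish as $y\to0^+$, consistently with (\ref{eq:planmaster}). Evaluating that integral is the main obstacle: inserting $g_0(w,t)=\frac{t-w}{t}h(w,t)$ with $h$ from (\ref{hexp}) and $\psi_{t-w}(\tau-t)$ from (\ref{psixexp_pos}), one is left with the $w$-integral of a product of modified Bessel functions and ${}_1F_2$-series, with arguments such as $I_1(2\sqrt{\lambda\mu tw})$ against $I_0(2\sqrt{\lambda\mu(\tau-t)(\tau-w)})$ and against the hypergeometric part. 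I would resolve this term by term, expanding each special function in power series and integrating the resulting monomials $w^{a}(t-w)^{b}$ by means of the Prudnikov integral tables already employed in Proposition \ref{propdensTxpos}; the Bessel$\times$Bessel contribution is expected to generate the ${}_0F_1$-series (the third term of (\ref{jointdistr4})), and the Bessel$\times{}_1F_2$ contribution the final double series, after reindexing and collecting. As a global consistency check, letting $y\to t^{-}$ in (\ref{eq:planmaster}) must return the marginal density $\psi_0(\tau)$ of $T_0$ established in Proposition \ref{prop:psi0}.
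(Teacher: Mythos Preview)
Your proposal is correct and follows essentially the same route as the paper: the paper's proof also rests on the decomposition $F_{Y(t),T_0}(y,\tau)=e^{-\lambda t}\psi_t(\tau-t)+\int_0^y g_0(w,t)\,\psi_{t-w}(\tau-t)\,{\rm d}w$ (its equations (\ref{jointdistr1})--(\ref{jointdistr3})), justified by the same Markov/memorylessness argument, and then expands $g_0$ and $\psi_{t-w}$ via (\ref{g0def}), (\ref{hexp}), (\ref{psixexp_pos}) to evaluate the resulting Bessel$\times$Bessel and Bessel$\times{}_1F_2$ integrals term by term. Your identification of which pieces of the integrand yield which blocks of (\ref{jointdistr4}) matches the paper's two displayed integral identities.
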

\begin{proof}
Due to (\ref{eq:defcdfYT0}), we note that, for $t>0$, 
\begin{equation}
F_{Y(t),T_0}(y,\tau) 
=\mathbb P[Y(t)=0,T_0\in {\rm d}\tau]/{\rm d}\tau +\int_{0}^{y} p_{Y(t),T_0}(x,\tau) {\rm d}x,
\label{jointdistr1}
\end{equation}
where $p$ is the subdensity $p_{Y(t),T_0}(x,\tau):=\frac{\partial}{\partial x} F_{Y(t),T_0}(x,\tau)$. 
We point out that for $\tau\in (t, \infty)$ and $ y\in (0,t)$, it is
\begin{equation}
\mathbb P[Y(t)=0,T_0\in {\rm d}\tau]/{\rm d}\tau={\rm e}^{-\lambda t} \psi _{t}(\tau-t),
 \label{jointdistr2}
\end{equation}
since  $\mathbb P[Y(t)=0]=\mathbb P[U_1>t]={\rm e}^{-\lambda t}$, $t>0$, and 
$\mathbb P[T_0\in {\rm d}\tau\,|\,Y(t)=0]  = \mathbb P(t+T_t\in {\rm d}\tau)$, $\tau >t$. 
By a similar reasoning, one also has 
\begin{equation}
p_{Y(t),T_0}(x,\tau)=g_0(x,t) \psi_{t-x}(\tau-t),
 \label{jointdistr3}
\end{equation}
where  $g_0$ and $\psi _{t-x}$ are defined in Eqs.\ (\ref{gxdef}) and (\ref{psix}), respectively. 
Making use of (\ref{jointdistr2}) and (\ref{jointdistr3}) in (\ref{jointdistr1}), the function 
$F_{Y(t),T_0}(y,\tau)$ can be obtained by recalling the expressions of $g_0(y,t)$ and $\psi _{x}(t)$ 
provided by (\ref{g0def}), (\ref{hexp})  and (\ref{psixexp_pos}). The resulting expression of 
$F_{Y(t),T_0}(y,\tau)$ involves the following identities:\\ 
\begin{eqnarray*}
&& \hspace*{-0.8cm}
\int_{0}^{y} \frac{(t-x)^{r-j+1}}{\sqrt{x}} I_1 \left(2 \sqrt{\lambda \mu t x}\right) {\rm d}x
\\
&& \hspace*{-0.2cm}
=\sqrt{\lambda \mu t} \sum_{k=0}^{r-j+1} {r-j+1 \choose k} (t-y)^{r-j+1-k} \frac{y^{k+1}}{k+1} \; 
{}_{1}F_{2}\left(1;k+2,2;\lambda \mu t y \right) 
\end{eqnarray*}
and 
\begin{eqnarray*}
&& \hspace*{-1.0cm}
 \frac{\lambda \sqrt{\lambda \mu}}{\sqrt{t}} {\rm e}^{-(\lambda+\mu) \tau}
\int_{0}^{y} \frac{t-x}{\sqrt{x}} 
I_1 \left(2 \sqrt{\lambda \mu t x}\right)I_0 \left(2 \sqrt{\lambda \mu (\tau-t) (\tau-x)}\right) {\rm d}x
\\
&& \hspace*{-0.8cm}
=\lambda^2 \mu {\rm e}^{-(\lambda+\mu) \tau}  
\times 
\sum_{r=0}^{+\infty} \frac{[\lambda \mu (\tau-t)]^r}{r!^2} \sum_{j=0}^r {r \choose j} (\tau-y)^{r-j} \frac{y^{j+1}}{j+1}
\\
&& \hspace*{-0.8cm}
\times 
 \left\{
t  \;{}_{1}F_{2}\left(1;j+2,2;\lambda \mu t y\right)-\frac{y}{j+2} \;{}_{1}F_{2}\left(2;j+3,2;\lambda \mu t y\right) \right\}
\\
&& \hspace*{-1cm}
= \lambda^2\,\mu\, y\, {\rm e}^{-(\lambda+\mu) \tau}\,  \sum_{j=0}^{+\infty} \frac{[\lambda \mu y (\tau-t)]^j}{j!} \; 
{}_{0}F_{1}\left(; j+1; \lambda \mu (t-\tau) (y-\tau)\right)\nonumber\\
&& \hspace*{-0.9cm}
\times \left[\frac{t}{(j+1)!}\; {}_{1}F_{2}\left(1;j+2,2;\lambda \mu t y\right) -\frac{y}{(j+2)!} \; 
{}_{1}F_{2}\left(2;j+3,2;\lambda \mu t y\right)\right],
\end{eqnarray*}
the latter being due to Eq.\ (2.15.2.5) of \cite{Prudnikov2} and identity
$$
I_0 \left(2 \sqrt{\lambda \mu (\tau-t) (\tau-x)}\right)
=\sum_{r=0}^{+\infty} \frac{[\lambda \mu (\tau-t)]^r}{r!^2}
\sum_{j=0}^{r} {r \choose j} (\tau-y)^{r-j} (y-x)^j.
$$
The proof thus follows after some calculations.
\end{proof}
\par
We conclude this paper by giving the expression of the conditional distribution of $X(t)$ given $T_0$, 
within $C_0$. The proof is omitted since it immediately follows from the definition of $W(t)$. 
\begin{proposition}
The conditional distribution of $X(t)$ given $T_0$, during a renewal cycle $C_0$, is expressed as   
\begin{equation}
\mathbb P[X(t)\leq x\,|\,T_0=\tau]
 =\mathbb P[W(t)>t/2\,|\,T_0=\tau]-\mathbb P[W(t)>(t+x)/2\,|\,T_0=\tau],
 \label{eq:PXcondT0}
\end{equation}
for $t\in (0,\tau)$ and $ x\in [0,t]$, where 
$$
\mathbb P[W(t)>w\,|\,T_0=\tau]=\frac{F_{Y(w),T_0}(t-w,\tau)}{\psi_0(\tau)},
\qquad w\in \Big(\frac{t}{2},t\Big),
$$
with $\psi_0(x)$ and $F_{Y(w),T_0}(y,\tau)$ given in (\ref{psi0exp}) and (\ref{jointdistr4}), 
respectively.
\end{proposition}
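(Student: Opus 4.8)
The plan is to reduce everything to the pathwise identity $X(t)=2W(t)-t$ recorded in the setup of this section, and then to translate the two survival events on the right-hand side into events for the compound Poisson process $Y(\cdot)$ of (\ref{Ydef}).

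First I would observe that, since $W(t)$ is the total time spent moving upwards in $(0,t]$, within a renewal cycle one has $X(t)=2W(t)-t$, so that $\{X(t)\le x\}=\{W(t)\le (t+x)/2\}$ and hence
$$\mathbb P[X(t)\le x\,|\,T_0=\tau]=\mathbb P[W(t)\le (t+x)/2\,|\,T_0=\tau].$$
Next, using the observation already recorded in the text that, conditional on $T_0=\tau$ and for $t\in(0,\tau)$, one has $t/2<W(t)\le T_0$ almost surely, the event $\{W(t)\le (t+x)/2\}$ coincides (up to a null set) with $\{t/2<W(t)\le (t+x)/2\}$. Since $x\ge 0$ gives $(t+x)/2\ge t/2$, the disjoint decomposition
$$\{W(t)>t/2\}=\{t/2<W(t)\le (t+x)/2\}\cup\{W(t)>(t+x)/2\}$$
yields, after taking conditional probabilities, exactly the subtraction formula (\ref{eq:PXcondT0}).

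It then remains to compute the conditional survival function of $W(t)$, and the key step is the pathwise identification of $\{W(t)>w\}$ with an event for $Y$. By definition $W(t)$ is the upward-time coordinate at which the path of $Y(\cdot)$ crosses the decreasing line $L_t(w)=t-w$; since $Y$ is nondecreasing, the crossing occurs after upward-time $w$ precisely when at upward-time $w$ the accumulated downward time has not yet reached that line, i.e.\ $\{W(t)>w\}=\{Y(w)\le t-w\}$ (the boundary case $Y(w)=t-w$ having zero probability). Conditioning on $T_0=\tau$ and recalling the definitions (\ref{eq:defcdfYT0}) of the subdistribution $F_{Y(w),T_0}$ and (\ref{eq:defpsi})--(\ref{psi0exp}) for the density $\psi_0$ of $T_0$, I would then write
$$\mathbb P[W(t)>w\,|\,T_0=\tau]=\mathbb P[Y(w)\le t-w\,|\,T_0=\tau]=\frac{\mathbb P[Y(w)\le t-w,\,T_0\in{\rm d}\tau]}{\mathbb P[T_0\in{\rm d}\tau]}=\frac{F_{Y(w),T_0}(t-w,\tau)}{\psi_0(\tau)},$$
which is the stated expression, with the explicit numerator supplied by (\ref{jointdistr4}).

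The main obstacle I anticipate is not any computation but the rigorous justification of the pathwise identity $\{W(t)>w\}=\{Y(w)\le t-w\}$ together with the change of variables between the real time $t$ at which $X$ is observed and the upward-time variable $w$ that parametrizes $Y$; one must check that the crossing $W(t)$ is well defined on the conditioning event, that the monotonicity of $Y$ makes the equivalence exact, and that the exceptional equality boundary contributes nothing. Once this definitional bookkeeping is settled, the proposition follows immediately, consistent with the authors' remark that the proof rests solely on the definition of $W(t)$.
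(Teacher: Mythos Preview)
Your proposal is correct and matches the paper's approach exactly: the paper omits the proof entirely, stating only that it ``immediately follows from the definition of $W(t)$,'' and your argument is precisely the unpacking of that definition via the pathwise identity $X(t)=2W(t)-t$ together with the equivalence $\{W(t)>w\}=\{Y(w)\le t-w\}$. There is nothing to add.
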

\par
We omit the explicit expression of the distribution  (\ref{eq:PXcondT0}), being too cumbersome. 
Some plots of the corresponding PDF are given in Figure 6 for some choices of $\mu$. 
We remark that the corresponding discrete component of such distribution is 
$$
 \mathbb P[X(t)=t\,|\,T_0=\tau]=\frac{{\rm e}^{-\lambda t}\psi_t(\tau-t)}{\psi_0(\tau)},
 \qquad t\in (0,\tau).
$$
\par
Finally, we omit the determination of the conditional distribution of $X(t)$ within 
a renewal cycle in the case of non-zero initial state, since the  involved calculations  
are very cumbersome.
%
\begin{figure}[t]  
\begin{center}
\centerline{
\epsfxsize=7cm
\epsfbox{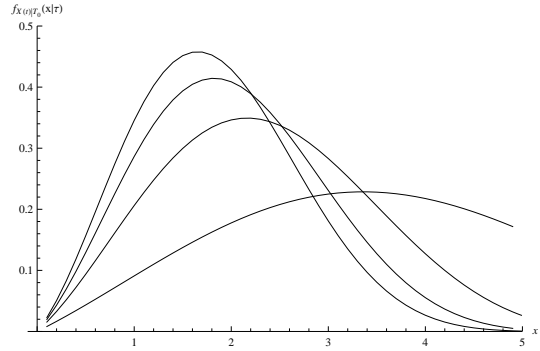}
}
\end{center}
\vspace{-0.5cm}
\caption{Conditional  density  of $X(t)$ given $T_0=\tau$, 
for $\tau=6$, $t=5$ and $\lambda=2$ with $\mu=0.1,0.5,1,1.5$ from bottom to top 
near the origin.}
\end{figure}



\end{document}